
\documentclass[smallextended,referee,envcountsect]{gOPT2e}
\usepackage{hyperref}  
\hypersetup{
    colorlinks=true,                         
    linkcolor=blue, 
    citecolor=red, 
    urlcolor=blue  } 

\usepackage{amsfonts, color}
\usepackage{amssymb, graphicx}
\usepackage[applemac]{inputenc}
\usepackage{subfigure}

\theoremstyle{plain}
\newtheorem{theorem}{theorem}[section]

\newtheorem{proposition}[theorem]{Proposition}

\theoremstyle{remark}
\newtheorem{remark}{Remark}

\theoremstyle{definition}
\newtheorem{definition}{Definition}
\def\disp{\displaystyle}

\def\tto{\;{\lower 1pt \hbox{$\rightarrow$}}\kern -10pt
\hbox{\raise 2pt \hbox{$\rightarrow$}}\;}

\def\lam{\lam}

\def\ve{\varepsilon}

\def\ox{\bar{x}}

\def\cl*co{\mbox{\rm cl}^*\mbox{\rm co}\,}
\def\cl{\mbox{\rm cl}\,}

\def\cl{\mbox{\rm cl}\,}

\def\st{\stackrel}

\def\hs7{\hspace*{7pt}}

\definecolor{Orange} {cmyk}{0,0.61,0.87,0}

\begin{document}

\title{{\itshape On Extended Versions of Dancs-Heged\"us-Medvegyev's Fixed Point Theorem}}

 \author{Truong Q. Bao$^{\rm a}$and Michel A. Th\'era$^{\rm b}$$^{\ast}$\thanks{$^\ast$Corresponding
author. Email: michel.thera@unilim.fr \vspace{6pt}}\\$^{a}${\em{Department of Mathematics $\&$ Computer Science, Northern Michigan University, Marquette, Michigan 49855, USA}}; $^{b}${\em{University of Limoges, France $\&$ Centre for Informatics and Applied Optimisation, Federation University, Australia}}\\\received{august 2015} }

\maketitle

\begin{abstract}
In this article we  establish some fixed point (known also as critical point, invariant point)  theorems in \textit{quasi-metric spaces}. Our results unify and further extend in some regards the fixed point theorem proposed by Dancs et al. (1983), the results given by Khanh and Quy (2010, 2011), the preorder  principles established by Qiu (2014), and the results obtained by Bao et al. (2015). In addition, we provide examples to illustrate that the improvements of our results are significant.
\begin{keywords} Ekeland Variational Principle, Fixed Point, Quasi Metric, Forward Cauchy Sequence, Forward Convergence.
\end{keywords}

\begin{classcode}49J53,  49J52 47J30,  54H25, 90C29,  90C30. \end{classcode}

\end{abstract}

\section{Introduction}
 
The celebrated Ekeland variational principle 
has been recognized as a fundamental tool in the study of various aspects 
of optimization theory  and variational  analysis.  Since it has been established, it  has found    many applications in different fields in Analysis.  For instance,  it has  been used to prove the infinite-dimensional mountain path theorem of Ambrosetti and Rabinowitz  \cite{AR73} and has been the key ingredient for proving new variational principles such as the Borwein-Preiss variational principle \cite{BP87}.  
It has provided
simple and elegant proofs of known results
such as 
the  
Caristi fixed point theorem in complete metric spaces \cite{c76} (in fact  the two results are equivalent).  It is  well established  that Dancs-Heged\"us-Medvegyev's fixed point theorem \cite[Theorem~3.1]{dhm83} has served 
 as a significant tool in proving  Ekeland's variational principle \cite{e79} and its extensions to  vector and set optimization; the reader is referred  for instance to \cite{bms15SVVA,bms15b, bks15, h05, kq10, kq11, q14a,q14b}. It is important to emphasize that  the Dancs-Heged\"us-Medvegyev  fixed point theorem is equivalent to  Ekeland's variational principle \cite{e79} in the sense that one implies the other. 
 The plan of the paper is organized as follows. We begin in section 2 with recalling the  Dancs-Heged\"us-Medvegyev fixed point theorem  and some of its recent  developments. Through this section we recall also some concepts and notations that we will use in the rest of the paper. Our work  requires the concept  of quasi-metric space, which we review in section 3. Armed with the  previous results and such quasi-metric tools, in section 4,  we  establish in Theorem 4.1 an  unified version of  Dancs-Heged\"us-Medvegyev fixed point theorem, as well as an  ``all sequences"  version of Theorem 4.1 .Finally, remarks on further research topics are given in section 5.

 \section{ Some  recent developments: a brief survey}
 For convenience of the reader, let us recall   the Dancs-Heged\"us-Medvegyev fixed point theorem  and some of its recent  developments.  Throughout,  we will use the notation``$\Phi: X\rightrightarrows X$" to denote  a  set-valued mapping,  that is a mapping  assigning to each point $x\in X$, a subset (possibly empty) $\Phi(x)$ of $X$ and  we say that 
  $\{x_n\} \subset X$ is a generalized Picard sequence of $\Phi$, 
 if $x_{n+1} \in \Phi(x_n)$ for all $n\in \mathbb{N}$. 

\begin{theorem} {\bf (\cite[Theorem~3.1]{dhm83}).}\label{thm-DHM} Let $(X,d)$ be a complete metric space, and let $\Phi: X\tto X$ be a set-valued mapping  satisfying the following conditions:
\begin{itemize}
\item[\bf (A1)] $\Phi(x)$ is a closed set for all $x\in X$;
\item[\bf (A2)] $x\in \Phi(x)$ for all $x\in X$;
\item[\bf (A3)] $x_2\in\Phi(x_1)\Longrightarrow\Phi(x_2)\subset\Phi(x_1)$ for all $x_1,x_2\in X$;
\item[\bf (A4)] For each  generalized Picard sequence  $\{x_n\}\subset X$ of $\Phi$, $\disp\lim_{n\to +\infty}d(x_n, x_{n+1}) = 0$.
\end{itemize}
Then, for every starting point $x_0\in X$,  there is a convergent 
sequence $\{x_n\} \subset X$ whose limit $x_{\ast}$ is a fixed point of $\Phi$, i.e., $\Phi(x_{\ast})=\{x_{\ast}\}$.
\end{theorem}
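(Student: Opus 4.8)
The plan is to run the classical greedy nested-sets argument. First I would record the structural consequences of the hypotheses: by \textbf{(A2)} each set $\Phi(x)$ is nonempty (it contains $x$), by \textbf{(A1)} it is closed, and by \textbf{(A3)} the relation ``$\,\cdot\in\Phi(\cdot)\,$'' is transitive, so along any generalized Picard sequence $\{x_n\}$ of $\Phi$ the sets $\Phi(x_0)\supseteq\Phi(x_1)\supseteq\Phi(x_2)\supseteq\cdots$ form a decreasing chain of nonempty closed subsets of $X$. Moreover, since $x_n\in\Phi(x_n)$, for every $y,z\in\Phi(x_n)$ one has $d(y,z)\le d(y,x_n)+d(x_n,z)\le 2r_n$, where $r_n:=\sup_{y\in\Phi(x_n)}d(x_n,y)\in[0,+\infty]$; hence $\diam\Phi(x_n)\le 2r_n$.

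The heart of the matter is to \emph{build}, starting from the prescribed $x_0$, a particular generalized Picard sequence along which $\diam\Phi(x_n)\to 0$; the point — and the place where \textbf{(A4)} enters — is that an arbitrary Picard sequence need not work, one must move ``greedily''. Concretely, having chosen $x_n$, I would pick $x_{n+1}\in\Phi(x_n)$ with $d(x_n,x_{n+1})\ge\frac12 r_n$ when $r_n<+\infty$ (possible by the definition of the supremum; if $r_n=0$ then $\Phi(x_n)=\{x_n\}$, so $x_n$ is already a fixed point and we may take the sequence constant from there on), and with $d(x_n,x_{n+1})\ge 1$ when $r_n=+\infty$. Applying \textbf{(A4)} to the sequence so constructed gives $d(x_n,x_{n+1})\to 0$; in particular the inequality $d(x_n,x_{n+1})\ge 1$ can occur only finitely often, so there is $N$ with $r_n<+\infty$ for all $n\ge N$, and then $r_n\le 2\,d(x_n,x_{n+1})\to 0$, whence $\diam\Phi(x_n)\le 2r_n\to 0$.

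Now I would invoke Cantor's intersection theorem in the complete metric space $(X,d)$: the decreasing sequence $\{\Phi(x_n)\}_{n\ge N}$ of nonempty closed sets with vanishing diameters has a singleton intersection, say $\bigcap_{n}\Phi(x_n)=\{x_\ast\}$. Since for $m\ge n$ one has $x_m\in\Phi(x_n)$ (transitivity from \textbf{(A3)}), it follows that $d(x_m,x_n)\le\diam\Phi(x_n)\to 0$, so $\{x_n\}$ is Cauchy; by completeness it converges, and since each $\Phi(x_n)$ is closed its limit lies in every $\Phi(x_n)$, hence equals $x_\ast$. Thus $\{x_n\}$ is the desired convergent Picard sequence with limit $x_\ast$.

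It remains to check $\Phi(x_\ast)=\{x_\ast\}$. Since $x_\ast\in\Phi(x_n)$ for every $n$, applying \textbf{(A3)} with $x_1=x_n$ and $x_2=x_\ast$ yields $\Phi(x_\ast)\subseteq\Phi(x_n)$ for every $n$, hence $\Phi(x_\ast)\subseteq\bigcap_{n}\Phi(x_n)=\{x_\ast\}$; combined with $x_\ast\in\Phi(x_\ast)$ from \textbf{(A2)}, this gives $\Phi(x_\ast)=\{x_\ast\}$, as required. The only genuine difficulty is the one already flagged: the selection must advance a fixed proportion of the radius $r_n$ so that \textbf{(A4)} can be used to force $\diam\Phi(x_n)\to 0$; the possibility $r_n=+\infty$ is a minor technical wrinkle handled as above, and everything else is routine.
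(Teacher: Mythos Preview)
Your argument is correct. Note, however, that the paper does not supply its own proof of this statement: Theorem~\ref{thm-DHM} is quoted from \cite{dhm83} as background, and no proof is given here. So there is nothing to compare against directly.

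That said, the paper's proofs of its generalizations (Theorems~\ref{unified-version} and~\ref{alt-version}) do implement the same overall strategy you use, with minor technical variants worth noting. In Theorem~\ref{alt-version} the greedy selection is written as
\[
q(x_n,x_{n+1})\ge \sup_{x\in\Phi(x_n)} q(x_n,x) - 2^{-n},
\]
which is the additive-error counterpart of your multiplicative $d(x_n,x_{n+1})\ge\tfrac12 r_n$; both force $r_n\to 0$ once condition~(A4)/(F4) gives $d(x_n,x_{n+1})\to 0$. The paper then concludes via Theorem~\ref{unified-version}, which does not invoke Cantor's intersection theorem but instead argues directly that any two points in $\bigcap_n\Phi(x_n)$ must be forward limits of $\{x_n\}$ and hence coincide by a uniqueness-of-limits hypothesis (condition~(E4)). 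Your route through Cantor's theorem is slightly more self-contained for the metric setting, and your handling of the case $r_n=+\infty$ is a nice touch that the paper's $-2^{-n}$ formulation sidesteps automatically; otherwise the two approaches are morally identical.
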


In \cite{kq10, kq11}, Khanh and Quy presented an extension of Theorem~\ref{thm-DHM} in order to establish a new version of Ekeland's variational principle for  weak $\tau$-functions. 

\begin{definition} {\bf($\tau$-functions and weak $\tau$-functions \cite{kq10,kq11}).} \label{def:tau}
Let $(X,d)$ be a metric space. A bifunction $p : X\times X\to \mathbb R_+$ is called a \textit{$\tau$-function}  whenever  the following four conditions hold:
\begin{description}
\item[\rm($\tau{1}$)] $p(x, z) \leq p(x, y) + p(y, z)$  (triangle inequality);
\item[\rm($\tau{2}$)]  for all $x\in X,\; p(x,\cdot)$ is lower semicontinuous (lower semicontinuity);
\item[\rm($\tau{3}$)] for all  sequences  $\{x_n\}$, $\{y_n\}$ with $\disp\lim_{n\to \infty} p(x_n, y_n) = 0$ and $\disp\lim_{n\to\infty} \sup_{m > n}p(x_n, x_m)  = 0$,  one has $\disp\lim_{n\to\infty} d(x_n, y_n) = 0$ ($p$-convergence implies $d$-convergence); 
\item[\rm($\tau{4}$)]  $p(x, y) = 0$ and $p(x, z) = 0$ imply that $y = z$ (indistancy implies coincidence).
\end{description}
A bifunction $p : X\times X\to \mathbb R_+$ is called   \textit{weak $\tau$-function} whenever  it satisfies conditions {\rm($\tau{1}$)}, {\rm($\tau{3}$)}, and {\rm($\tau{4}$)}.
\end{definition}

\begin{theorem}{\bf (\cite[Lemma~3.4]{kq11})} \label{DHM-kq} 
Let $(X,d)$ be a metric space,  $p$ be a weak $\tau$-function on $X$, and  $\Phi: X\tto X$ be  a set-valued mapping.   Suppose given a generalized Picard sequence $\{x_n\}\subset X$   of $\Phi$ convergent to $\ox$  with respect to $p$ in the sense that $\disp\lim_{n\to\infty} p(x_n,\ox) = 0$ with the following properties:
\begin{itemize}
\item[\bf (B1)]   $\Phi(x_{n+1})\subset \Phi(x_n)$ for all $n\in \mathbb{N}$;
\item[\bf (B2)] $\disp\lim_{n\to \infty} \sup_{u\in \Phi(x_n)}\;p(x_n,u) = 0$;

\item[\bf (B3)] $\ox \in \Phi(x_n)$ for all $n\in \mathbb{N}$.
\end{itemize}
Then, 
$$
\bigcap_{n\in \mathbb{N}} \Phi(x_n) = \{\ox\}.
$$
Assume, in addition, that
\begin{itemize}
\item[\bf (B4)] $\Phi(\ox) \neq \emptyset$ and $\Phi(\ox)\subset \Phi(x_n)$ for all $n\in \mathbb{N}$.
\end{itemize} 
\noindent Then $\ox$ is an invariant point of $\Phi$, i.e., $\Phi(\ox) = \{\ox\}$.
\end{theorem}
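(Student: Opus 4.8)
The plan is to prove the two assertions in turn, exploiting the nesting and the $p$-convergence hypotheses.

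\medskip

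\textbf{First assertion: $\bigcap_{n\in\mathbb N}\Phi(x_n)=\{\ox\}$.}
First I would establish the inclusion $\supseteq$. By \textbf{(B3)}, $\ox\in\Phi(x_n)$ for every $n$, so $\ox\in\bigcap_n\Phi(x_n)$. For the reverse inclusion, take any $y\in\bigcap_n\Phi(x_n)$; then $y\in\Phi(x_n)$ for all $n$, hence by \textbf{(B2)} we get $0\le p(x_n,y)\le\sup_{u\in\Phi(x_n)}p(x_n,u)\to0$, i.e. $\lim_{n\to\infty}p(x_n,y)=0$. On the other hand, we are given $\lim_{n\to\infty}p(x_n,\ox)=0$. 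Now I want to invoke condition $(\tau 4)$ (which the weak $\tau$-function $p$ satisfies), but that condition is stated for a fixed first argument: $p(x,y)=0$ and $p(x,z)=0$ imply $y=z$. The bridge is the triangle inequality $(\tau 1)$ together with $(\tau 3)$: from $p(x_n,y)\to0$ and $p(x_n,x_m)\le\sup_{u\in\Phi(x_n)}p(x_n,u)\to0$ (this last using \textbf{(B1)}, since $x_m\in\Phi(x_{m-1})\subset\cdots\subset\Phi(x_n)$ for $m>n$, so $\lim_{n\to\infty}\sup_{m>n}p(x_n,x_m)=0$), condition $(\tau 3)$ yields $d(x_n,y)\to0$; likewise $d(x_n,\ox)\to0$; hence $y=\ox$ by uniqueness of limits in the metric $d$. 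This gives $\bigcap_n\Phi(x_n)\subseteq\{\ox\}$ and completes the first assertion.

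\medskip

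\textbf{Second assertion: $\Phi(\ox)=\{\ox\}$.}
Assume now \textbf{(B4)}. Since $\Phi(\ox)\ne\emptyset$, pick any $v\in\Phi(\ox)$. By \textbf{(B4)}, $\Phi(\ox)\subset\Phi(x_n)$ for all $n$, so $v\in\bigcap_n\Phi(x_n)$, which by the first assertion equals $\{\ox\}$; therefore $v=\ox$. Since $v$ was an arbitrary element of $\Phi(\ox)$, we conclude $\Phi(\ox)\subseteq\{\ox\}$, and combined with $\Phi(\ox)\ne\emptyset$ this forces $\Phi(\ox)=\{\ox\}$, i.e. $\ox$ is an invariant point of $\Phi$.

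\medskip

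\textbf{Main obstacle.}
The delicate point is the passage from ``$p(x_n,y)\to0$ and $p(x_n,\ox)\to0$'' to ``$y=\ox$'': one cannot apply $(\tau4)$ directly because the two vanishing $p$-values do not share a fixed first argument, and a weak $\tau$-function need not be symmetric nor satisfy $p(x,x)=0$. The correct route is through $(\tau 3)$, converting $p$-smallness into $d$-smallness, which is why the hypothesis \textbf{(B1)} (nestedness of the $\Phi(x_n)$) is essential: it is exactly what guarantees $\lim_{n\to\infty}\sup_{m>n}p(x_n,x_m)=0$, the second premise needed to fire $(\tau 3)$. Once this is in place the rest is bookkeeping.
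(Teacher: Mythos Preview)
Your proof is correct. The key step---using \textbf{(B1)} to get $\lim_{n}\sup_{m>n}p(x_n,x_m)=0$ and then firing $(\tau3)$ twice (with $y_n\equiv y$ and with $y_n\equiv\ox$) to pass to $d$-convergence and invoke uniqueness of limits in $(X,d)$---is exactly right, and the second assertion follows immediately from the first.

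The paper, however, does not prove this statement directly. It derives it as a corollary of the main unified result (Theorem~\ref{unified-version}): one defines a quasi-metric $q$ by $q(x,y)=p(x,y)$ for $x\ne y$ and $q(x,x)=0$, and then checks that \textbf{(B1)}--\textbf{(B4)} imply the abstract hypotheses \textbf{(E1)}--\textbf{(E5)}. The substantive work is the verification of \textbf{(E4)} (uniqueness of the forward limit), and there the paper runs essentially the same $(\tau3)$ argument you give. So the analytic core coincides; the difference is packaging. Your route is shorter and self-contained, while the paper's detour through Theorem~\ref{unified-version} is the point of that section: it demonstrates that Khanh--Quy's lemma is genuinely subsumed by the unified quasi-metric framework.
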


\begin{remark} In \cite[Lemma~3.4]{kq11}, Khanh and Quy imposed  assumptions on one generalized Picard sequence under consideration instead of all sequences in the original or similar results. It is important to emphasize that Dancs et al.'s proof in \cite[Theorem~3.1]{dhm83} also holds under the validity of conditions {\rm(B1)}--{\rm(B4)}.  In fact, conditions {\rm(A1)}--{\rm(A4)} ensure the existence of a generalized Picard sequence of $\Phi$ which satisfies condition {\rm(B1)}--{\rm(B4)}. It is worth emphasizing that  the proof of \cite[Lemma~3.4]{kq11} is nothing but the middle part of Khanh-Quy's version of Ekeland's variational principle.
\vspace*{.05in}
\end{remark}

In \cite{q14a}, Qiu established a general preorder  principle from which 
most of the known set-valued Ekeland variational principles and their improvements were derived. However, it could
not imply Khanh and Quy's EVP in the afore-mentioned  papers \cite{kq11}   in which
a weak $\tau$-function plays the role of the metric in the original principle since the generalized distance between two distinct points $x$ and $y$ of a weak $\tau$-function $p(x,y)$ may be zero. Then, Qiu further revised it to a more general version in \cite[Theorem~2.1]{q14b}.

\begin{definition} {\bf (preordered and ordered sets)}
Let $\Xi$ be a nonempty set and $Q\subset \Xi\times \Xi$ be a subset of the cartesian  product  $ \Xi\times \Xi$. Let us  define a binary relation $\preceq$ associated to $Q$ on $\Xi$ by
$$
v \preceq z \;:\Longleftrightarrow\; (v,z) \in Q.
$$
The binary relation $\preceq$ is a \textit{ preorder}; known also as a {\em quasiorder}, whenever it satisfies the following properties:
\begin{itemize}
\item[] $\big[\;\forall \; z\in \Xi, \; z \preceq z\; \big]$  (reflexivity) and  
\item[] $\big[\; \forall\; z, z', z''\in \Xi,\; z \preceq z' \;\wedge\; z' \preceq z'' \;\Longrightarrow\; z \preceq z''\; \big]$ (transitivity).
\end{itemize}
A set   equipped with a preorder is called a \textit{ preordered }set. When $\Xi = Z$ is a vector space, we call the pair  $(Z,\preceq)$ a preordered vector space. 
If a preorder is also {\em antisymmetric}, i.e., 
\begin{itemize}
\item[---] $z \preceq v\;\wedge\;v \preceq z\;\Longrightarrow\;v = z$ (antisymmetry),
\end{itemize}  
then it is a\textit{ partial order}. 
\end{definition}

\begin{theorem}{\bf (\cite[Theorem~2.1]{q14b})} \label{DHM-qiu}
 Let $(X,\preceq)$ be a preordered  set and consider the level-set mapping $S: X\tto X$ of the set $X$ with respect to the preorder $\preceq$ defined by
\begin{eqnarray}\label{def-S}
S(x) := \{u\in X|\; u\preceq x\}.
\end{eqnarray}
Let $x_0\in X$  be such that $S(x_0)\neq \emptyset$ and consider $\varphi: (X,\preceq) \to \mathbb{R}\cup\{\pm\infty\}$   an extended real-valued function which is monotone with respect to $\preceq$. Suppose that
\begin{itemize}
\item[\bf(C1)] $-\infty < \inf\{ \varphi (x)|\; x\in S(x_0)\} < +\infty$;
\item[\bf(C2)] For any $x\in S(x_0)$ with $-\infty < \varphi (x) < +\infty$ and for any $z_1, z_2 \in S(x)$ with $z_1\neq z_2$, one has $\varphi(x) > \min\{\varphi (z_1), \varphi (z_2)\}$;
\item[\bf(C3)] For any generalized Picard sequence $\{x_n\}\subset S(x_0)$ of $S$  satisfying
$$
\varphi (x_n) - \inf_{x\in S(x_{n\!-\!1})}\;\varphi (x) \to 0 \; \mbox{ as } \; n\to \infty,
$$
there exists $x_\ast\in X$ such that $x_\ast\in S(x_n)$ for all $n\in \mathbb{N}$.
\end{itemize}
Then, there exists $\ox\in X$ such that 
\begin{itemize}
\item[\bf(i)] $\ox\in S(x_0)$;
\item[\bf(ii)] $S(\ox) \subset \{\ox\}$ which holds as equality provided that $S(\ox)\neq \emptyset$.  
\end{itemize}
\end{theorem}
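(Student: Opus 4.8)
The plan is to run the classical Dancs--Heged\"us--Medvegyev/Ekeland iteration, with $\varphi$ playing the role that the distance plays in the metric setting: build a generalized Picard sequence of $S$ along which $\varphi$ decreases to the relevant infima, extract a limit point from {\rm(C3)}, and then use {\rm(C2)} to show that this limit point admits no companion in its level set. First note that {\rm(C1)}, the monotonicity of $\varphi$, and the reflexivity/transitivity of $\preceq$ keep every infimum in play finite: if $x\in S(x_0)$ then $S(x)\subset S(x_0)$ by transitivity, so $\inf_{u\in S(x)}\varphi(u)\ge\inf_{u\in S(x_0)}\varphi(u)>-\infty$, while $x\in S(x)$ by reflexivity gives $\inf_{u\in S(x)}\varphi(u)\le\varphi(x)$. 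Starting from $x_0$, choose $x_1\in S(x_0)$ with $\varphi(x_1)<\inf_{u\in S(x_0)}\varphi(u)+1$, and then recursively $x_{n+1}\in S(x_n)$ with
$$
\varphi(x_{n+1})\ \le\ \inf_{u\in S(x_n)}\varphi(u)+\frac1{n+1}.
$$
An easy induction shows $x_n\in S(x_0)$ and $\varphi(x_n)\in\R$ for all $n\ge1$, and $\{x_n\}$ is a generalized Picard sequence of $S$; since $x_{n+1}\preceq x_n$, monotonicity makes $(\varphi(x_n))_n$ non-increasing and bounded below, hence convergent to some $\mu\in\R$.

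Next, because $x_n\in S(x_{n-1})$ forces $\varphi(x_n)\ge\inf_{u\in S(x_{n-1})}\varphi(u)$, the defining inequality yields $0\le\varphi(x_n)-\inf_{u\in S(x_{n-1})}\varphi(u)\le\frac1n\to0$, so {\rm(C3)} applies to $\{x_n\}$ and furnishes $x_\ast\in X$ with $x_\ast\in S(x_n)$ for every $n$. Put $\ox:=x_\ast$. Then $\ox\in S(x_1)\subset S(x_0)$, which is {\rm(i)}. Moreover $\ox\preceq x_n$ gives $\varphi(\ox)\le\varphi(x_n)\to\mu$, while $\ox\in S(x_n)$ gives $\varphi(\ox)\ge\inf_{u\in S(x_n)}\varphi(u)\ge\varphi(x_{n+1})-\frac1{n+1}\to\mu$; hence $\varphi(\ox)=\mu$, a finite number.

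To prove {\rm(ii)}, assume for contradiction that $z\in S(\ox)$ with $z\neq\ox$. Since $\ox\preceq x_n$, transitivity gives $S(\ox)\subset S(x_n)$, so $z\in S(x_n)$ for all $n$, and the estimate of the previous paragraph (with $z$ in place of $\ox$) gives $\varphi(z)\ge\mu=\varphi(\ox)$. On the other hand $z\preceq\ox$ and monotonicity give $\varphi(z)\le\varphi(\ox)$, so $\min\{\varphi(z),\varphi(\ox)\}=\varphi(z)$. Now apply {\rm(C2)} with $x=\ox$ --- legitimate since $\ox\in S(x_0)$ and $\varphi(\ox)=\mu\in\R$ --- and with $z_1=z$, $z_2=\ox\in S(\ox)$, $z_1\neq z_2$: it gives $\varphi(\ox)>\min\{\varphi(z),\varphi(\ox)\}=\varphi(z)\ge\varphi(\ox)$, a contradiction. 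Therefore $S(\ox)\subset\{\ox\}$, and since $\ox\in S(\ox)$ by reflexivity, $S(\ox)=\{\ox\}$.

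Since the construction is the standard Dancs--Heged\"us--Medvegyev one, no deep new idea is needed; the only genuinely delicate point --- and hence the main obstacle --- is the bookkeeping of the values of $\varphi$. One must ensure that each iterate, and above all the limit point $\ox$, has a \emph{finite} $\varphi$-value (otherwise {\rm(C2)} cannot be invoked at $\ox$), and one must secure the identity $\varphi(\ox)=\mu$ through a two-sided squeeze; this is precisely what makes the finiteness clause in {\rm(C1)} together with the monotonicity of $\varphi$ indispensable.
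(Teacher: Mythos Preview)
Your argument is correct. The iteration is well set up, the squeeze for $\varphi(\ox)=\mu$ is clean, and your invocation of {\rm(C2)} with $z_1=z$ and $z_2=\ox$ is exactly what forces the contradiction. You also handle the finiteness bookkeeping carefully, which is the only place a direct proof can slip.

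However, your route is \emph{not} the one taken in the paper. The paper does not prove Theorem~\ref{DHM-qiu} directly; it derives it as a corollary of the unified fixed-point result (Theorem~\ref{unified-version}). Concretely, the authors introduce the quasi-metric $q(x,y):=\lvert\varphi(x)-\varphi(y)\rvert$ on $S(x_0)$, build the same kind of Picard sequence you do, and then verify hypotheses {\rm(E1)}--{\rm(E5)} of Theorem~\ref{unified-version} for this $q$: {\rm(E1)} and {\rm(E5)} come from transitivity of $\preceq$, {\rm(E2)} is the reformulation of $\varphi(x_n)-\inf_{S(x_{n-1})}\varphi\to 0$ as $\sup_{u\in S(x_{n-1})}q(x_n,u)\to 0$, {\rm(E3)} is exactly {\rm(C3)}, and {\rm(E4)}---uniqueness of forward limits---is obtained from {\rm(C2)}. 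The invariant-point conclusion is then read off from Theorem~\ref{unified-version}.

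What each approach buys: yours is self-contained and shows that Qiu's principle needs no auxiliary quasi-metric machinery; the paper's approach, by contrast, is the whole point of the section---it demonstrates that Theorem~\ref{unified-version} subsumes Qiu's principle, thereby supporting the claim that Theorem~\ref{unified-version} unifies the existing DHM-type results. So while your proof is perfectly valid for the bare statement, it does not serve the paper's purpose here.
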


\begin{remark} Since $\preceq$ is a preorder, the level-set mapping $S$ defined by \eqref{def-S} automatically satisfies conditions {\rm(A2)} and {\rm(A3)}. It plays the role  of the set-valued mapping $\Phi$ in Theorems~\ref{thm-DHM} and \ref{DHM-kq}. The function $\varphi$ is nothing but   a  utility function associated to   the preorder $\preceq$. It allows the author not to impose some topological properties on $X$. Later, we will show that some relaxation of the completeness and  separation  properties of the metric space $(X,d)$ in Dansc et al.'s result are fulfilled under the imposed assumptions {\rm(C1)}--{\rm(C3)}.
\end{remark}

In \cite{bms15SVVA, bms15b}, Bao, Mordukhovich and Soubeyran established a far going extension of Dancs-Heged\"us-Medvegyev's fixed point theorem for parametric  multifunctions  in quasi-metric spaces. This extension  can also be interpreted as an existence theorem of minimal points with respect to reflexive and transitive preferences for sets in products spaces with  possible applications to  behavioral sciences. Below is a simple version of \cite[Theorem~2.3]{bms15SVVA}.

 
\section{Quasi-metric spaces: definition, basic properties and examples}
\begin{definition} {\bf (quasi-metric spaces)}\label{quasi}
A quasi-metric space  (also called quasi-pseudo-metric space by Reilly et al. \cite{rsv82}) is a pair  $(X,q)$ consisting of a set $X$  and  a function $q:X\times X\longmapsto\mathbb R_+:=[0,\infty)$ on $X\times X$ having the following three properties:
\begin{itemize}
\item[\bf (i)] $q(x,x^{\prime})\ge 0$ for all $x,x^\prime\in X$ and $q(x,x)=0$ for all $x\in X$ (positivity);
\item[\bf (ii)] $q(x,x'')\le q(x,x^{\prime})+q(x^{\prime},x'')$ for all $x, x^{\prime},x''\in X$ (triangle inequality).
\end{itemize}
\end{definition}

\noindent 

There is an abundant literature devoted to ``distances"  where the requirement of symmetry is omitted.  Quasi-metrics are common in real life. For example, given a set $X$ of mountain villages, the typical walking times between elements of $X $ form a quasi-metric because traveling up hill takes longer than traveling down hill. Another example is a  geometry topology having one-way streets, where a path from point $A $  to point $B$  comprises a different set of streets than a path from $B$ to $A$. These ``metrics"  have some interest in topology,  but they are also used in applied mathematics in the calculus of variation. Recently, Bao et al. studied in \cite{bms15JOTA, bms15a, bks15} some mathematical models arising in some
areas of behavioral sciences (called sometimes
``theories of stability/stay and change"). It seems that everyone agrees that the cost to change in these models does not satisfy the symmetry property.  
 Note that 
  several terminologies are used for the   concept of what we call in this paper, quasi-metric: Mennuci \cite{menucci} uses the term \textit{asymmetric semidistance},  Cobza\c{s} \cite{c13}  and Reilly et al. \cite{rsv82} speak about \textit{quasi-pseudo-metrics}, while Deza et al. \cite{deza},  use  the name \textit{quasi semi-metric}  and Mainik and Mielke \cite{mainik05} employ the name of dissipation distance.
 
 As well-known, if in addition, a quasi-metric  satisfies the \textit{symmetry}  property $q(x,x^{\prime })=q(x^{\prime },x)$ for all $x,x^\prime\in X$, then $q$ is a \textit{metric}.
Part (ii) in the previous definition of a  quasi-metric was formalized
 by Hausdorff   in the  celebrated monography  ``\textit{Grundz\"{u}ge der Mengenlehre}" \cite[p. 145–146]{haus}
which is  considered as the foundation of  the theory of topological and metric spaces (see details in \cite{deza} and \cite{wilson}).  Part (ii) was  first formalized
  by Fréchet \cite{frechet}   and later treated by  Hausdorff \cite{haus}. 
 Some examples of quasi-metrics   are listed below: 
\begin{itemize}
\item
  the Sorgenfrey quasi-metric  on  $\mathbb R$,   defined by  $q(x,y)= y-x$ if $y\geq x$ and $q(x,y)=1 $ otherwise.  This quasi-metric describes the process of filing down a metal stick: it is easy to reduce its size, but it is difficult or impossible to grow it;
\item 
the quasi-metric on $\mathbb R$ defined by  $q(x,y)=\max (y-x,0)$;
\item
 the real half-line quasi-semi-metric   defined by $ q(x,y) = \max(0, \ell n \frac{y}{x})$ on the set of strictly positive reals;
\item 
 the circular-railroad distance,  see, \cite[Example 2.2]{smith}.   Imagine a circular railroad line 
which moves only in a counterclockwise direction around a circular track,
represented by the unit circle  $\mathcal{S}^1$. The  circular-railroad quasi-metric  from any point, $x\in \mathcal{S}^1$, to any other point,
$y\in \mathcal{S}^1$, is simply the counterclockwise circular arc length from $x$ to $y$ in $\mathcal{S}^1$;
\item  the dissipation distance related to the energetic formulation of energetic models
for rate-independent systems \cite{mainik05}:  consider $X:=\;\{u\in L^1(\Omega, \mathbb R^p):\;\Vert u\Vert_\infty \leq 1\}$ equipped with the weak $L^1$- topology and the dissipation distance defined by $q(u_1,u_2) = \Vert u_1-u_2\Vert_{L^1}.$
\item
the Minkowski gauge function  defined on $\mathbb R^n$  by   $q_B(x,y)=\inf\{\alpha>0:y-x\in \alpha B\}$, where $B$ is a convex  compact subset of $\mathbb R^n$.
\end{itemize}
 Since the conjugate bifunction $\overline{q} : X \times X \to \mathbb R_+$ of a quasi-metric defined by $\overline{q}(x,y) = q(y,x)$ is also a quasi-metric. Following Kelly \cite{kelly}, the space $(X,q,\overline{q})$ is called a bitopological spaces with two topologies:
\begin{itemize}
\item the topology $\tau_q$ generated by the balls with center $x\in X$ and radius $\varepsilon$ and defined by  $\mathbb{B}_q(x;r) := \{y\in X: q(x,y)<\varepsilon\}$;
\item  the topology $\tau_{\overline{q}}$ 
generated by the balls with center  $x\in X$ and radius  $\varepsilon$ and defined by 
$\mathbb{B}_{\overline{q}}(x;r) := \{y\in X:  \overline{q}(x,y) <\varepsilon\} = \{y\in X:  q(y,x) <\varepsilon\}$.
\end{itemize}
The balls $\mathbb{B}_q(x;r)$ and $\mathbb{B}_{\overline{q}}(x;r)$ are called forward and backward balls by Menucci \cite{menucci} and left and right balls by Cabza\c{s}  \cite{c13}.\vspace*{.05in}
 These two topologies allow us to define two notions of convergences associated to the quasi-metric $q$:
\begin{definition} {\bf (convergences in quasi-metric spaces).}
\begin{itemize}
\item[\bf (i)]
A sequence $ \{x_n\}$ is said to be backward convergent to $x_\infty$, if it is convergent with respect to the topology $\tau_q$, i.e.,  $\disp\lim_{n\to +\infty} q(x_\infty,x_n) = 0$.
\item[\bf (ii)]  A sequence $ \{x_n\}$ is said to be forward convergent to $x_\infty$, if  it is convergent with respect to the topology $\tau_{\overline{q}}$, i.e., $\lim_{n\to +\infty} q(x_n,x_\infty)=0$.
\end{itemize}
\end{definition}
 Since a quasi-metric   may fail to be symmetric, the quasi-distances $q(x_n,x_m)$ and $q(x_m,x_n)$ are different. The definition of Cauchy sequences in metric spaces takes two following forms. 
\begin{definition} {\bf (Cauchy sequences in quasi-metric spaces).}
\begin{itemize}
\item[\bf (i)]  A sequence $ \{x_n\}$ is said to be  forward Cauchy  if for every $\varepsilon >0$, there is some $N_\varepsilon \in \mathbb{N}$ such that  for every $n\geq N_\varepsilon$ and every $k\in \mathbb N$, then  $q(x_n, x_{n+k}) <\varepsilon$.
\item[\bf (ii)]  A sequence $ \{x_n\}$ is said to be backward Cauchy  if for every $\varepsilon >0$, there is some  $N_\varepsilon \in \mathbb N$ such that  for every $n\geq N_\varepsilon$ and and every $k\in \mathbb N$, then  $q( x_{n+k,} x_n) <\varepsilon$.
\end{itemize}
\end{definition}

Note that in a metric space the two concepts coincide with the usual concept of a  Cauchy-sequence.
\begin{remark}
At this point, it is important for the reader to   to be aware of the differences bettwen our definitions and the ones previously used. 
The backward convergence is termed either $q$-convergence or convergence w.r.t. $\tau_q$ in \citep{kelly, reilly}. The forward convergence is called as $\overline q$-convergence or convergence w.r.t. the topology $\tau_r$ in the aforementioned references, and left sequential convergence in \cite{bms15SVVA,bms15b, bks15, bms15JOTA} and many references therein. The forward Cauchy sequence is known as left-sequential Cauchy (also as lelf Cauchy or Cauchy) sequence in Bao et al. \cite{bms15SVVA,bms15b, bks15}, left-K-Cauchy in Reilly et al. \cite{reilly}.   The backward Cauchy notion is known as $p$-Cauchy in \cite[ Definition 2.10]{kelly}, right-K-Cauchy in Reilly et al. \cite{reilly}.
The forward completeness is used in \cite{bms15SVVA,bms15b, bks15, bms15JOTA} as left-sequential completeness and in Reilly \cite{reilly} as left-K-completness. The backward completeness was studied in Kelly \cite{kelly} under the name of  $p$-completness and in Reilly \cite{reilly} as right-K-completness.
\vskip 2mm
This change of notation is 
motivated by the following consideration (private communication with A. Soubeyran): denoting the state of an object at the time $n$ by $x_n$,  then the future (forward) state is $x_{n+1}$. Then, $q(x_n, x_{n+1})$ is the cost to change from $x_n$ to $x_{n+1}$. If the expected /ideal state is $x_\infty$, then the cost to change from the current state to the ideal state is$q(x_n, x_\infty)$ which should be called a forward cost.
\end{remark}
According to Reilly et al., \cite [Example 1. p. 130]{rsv82}, a  sequence could be forward convergent without being backward convergent and a sequence could be convergent without being forward or backward convergent.
 \begin{definition} {\bf (completeness in quasi-metric spaces).}
\begin{itemize}
\item[\bf (i)]  The space $(X,q)$ is   forward (resp. backward)  Hausdorff,  if  every forward  (resp. backward)  converging sequence has a unique forward (resp. backward) limit point.
\item[\bf (ii)] The space $(X,q)$ is  forward  (resp. backward) complete,  if every forward (resp. backward) Cauchy sequence is forward (resp. backward) convergent.
\item[\bf (iiii)]  The space $(X,q)$ is forward-backward (resp. backward-forward) complete,  if every forward (resp. backward) Cauchy sequence is backward (resp. forward)  convergent.
 \end{itemize}
\end{definition}
\begin{theorem} {\bf (\cite[Corollary~4.5]{bms15b})} \label{DHM-bms}
Let $(X,q)$ be a   forward complete and forward Hausdorff \footnote{\!$^{,2,3}$ the adjective `forward' (i.e., `left-sequential' in the original version) was obmitted for simplicity.
} quasi-metric space, and let $\Phi: X\tto X$ be a  set-valued mapping  satisfying the conditions:
\begin{itemize}
\item[\bf (D1)] $x\in\Phi(x)$ for all $x\in X$;
\item[\bf (D2)] $u\in\Phi(x)\Longrightarrow\Phi(u)\subset\Phi(x)$ for all $x,u\in X$;
\item[\bf (D3)] For each generalized Picard sequence $\{x_n\}$ of $\Phi$ 
being  forward convergent \footnotemark to $x_{\ast}$, then $x_{\ast}\in\Phi(x_n)$ for all $n\in\mathbb{N}$;
\item[\bf (D4)] For each generalized Picard sequence $\{x_n\}\subset X$, $\disp\lim_{n\to +\infty}q(x_n, x_{n+1})=0$.
\end{itemize}
Then, for every point $x_0\in X$ there is a generalized Picard sequence $\{x_n\}\subset X$ of $\Phi$ starting from $x_0$ and forward converging \footnotemark to an invariant point $\ox$ of $\Phi$, i.e., $\Phi(\ox)=\{\ox\}$.
\end{theorem}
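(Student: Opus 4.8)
The plan is to reproduce, in the asymmetric setting, the classical Dancs--Heged\"us--Medvegyev greedy iteration, taking care that every estimate is written with the arguments of $q$ in the ``forward'' order $q(x_n,x_m)$ with $m\ge n$, and to replace the Cantor-intersection/closedness ingredient of the metric proof by forward completeness together with condition (D3). First I would fix a starting point $x_0\in X$ and construct a generalized Picard sequence recursively: given $x_n$ (with $\Phi(x_n)\neq\emptyset$ by (D1)), set $c_n:=\sup\{q(x_n,u):u\in\Phi(x_n)\}\in[0,+\infty]$ and choose $x_{n+1}\in\Phi(x_n)$ with $q(x_n,x_{n+1})\ge\frac{1}{2}\min\{c_n,n\}$, which is possible by the definition of the supremum; the truncation by $n$ is inserted precisely to handle the case in which some $\Phi(x_n)$ is unbounded. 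A one-step induction based on (D2) then gives $x_m\in\Phi(x_n)$ and $\Phi(x_m)\subset\Phi(x_n)$ for all $m\ge n$.

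Next, since $\{x_n\}$ is a generalized Picard sequence, (D4) forces $q(x_n,x_{n+1})\to0$, hence $\min\{c_n,n\}\to0$; because $n\to\infty$, this is possible only if $c_n<n$ for all large $n$, so in fact $c_n\to0$. As $x_m\in\Phi(x_n)$ for $m\ge n$, one has $q(x_n,x_m)\le c_n$, and $c_n\to0$ makes $\{x_n\}$ forward Cauchy---note that only $c_n\to0$ is used, the $c_n$ need not be monotone. Forward completeness then supplies $\ox\in X$ with $q(x_n,\ox)\to0$, and (D3) yields $\ox\in\Phi(x_n)$ for every $n$. To finish I would check $\Phi(\ox)=\{\ox\}$: by (D1), $\ox\in\Phi(\ox)$, so $\Phi(\ox)\neq\emptyset$; and if $y\in\Phi(\ox)$, then $\ox\in\Phi(x_n)$ together with (D2) gives $\Phi(\ox)\subset\Phi(x_n)$, hence $y\in\Phi(x_n)$ for all $n$, hence $q(x_n,y)\le c_n\to0$, so $\{x_n\}$ forward converges to $y$ as well, and the forward Hausdorff property forces $y=\ox$. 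The sequence $\{x_n\}$ is then the desired generalized Picard sequence starting at $x_0$ and forward converging to the invariant point $\ox$.

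\textbf{Main obstacle.} I expect the only genuine subtlety to be the bookkeeping imposed by the lack of symmetry of $q$: the ``radius'' $c_n$, the forward-Cauchy estimate, the forward convergence $q(x_n,\ox)\to0$, and the uniqueness argument must all keep the arguments of $q$ in the correct slots, and one must resist assuming monotonicity of $\{c_n\}$. Apart from that the argument is essentially \cite[Theorem~3.1]{dhm83}, with forward completeness and (D3) taking over the roles played there by completeness and closedness of the values (A1), and the forward Hausdorff property ensuring that the forward limit---and hence the invariant point---is unique.
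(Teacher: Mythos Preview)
Your proof is correct and follows essentially the same greedy-iteration route that underlies the paper's treatment. The paper does not give a direct, self-contained proof of this theorem (it is quoted from \cite{bms15b}); instead it is recovered as a special case of Theorem~\ref{alt-version} via the remark that forward completeness together with (D3) yields (F2) (through Proposition~\ref{prop-Cauchy}) and the forward Hausdorff property yields (F3), after which Theorem~\ref{unified-version} is invoked. Unwinding that chain gives exactly your argument: build $\{x_n\}$ so that $q(x_n,x_{n+1})$ nearly realizes $c_n=\sup_{u\in\Phi(x_n)}q(x_n,u)$, use (D4) to force $c_n\to0$, deduce forward Cauchyness, pass to a forward limit $\ox$, place $\ox\in\bigcap_n\Phi(x_n)$ via (D3), and conclude $\Phi(\ox)=\{\ox\}$ from (D1), (D2) and the forward Hausdorff property. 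One minor point in your favor: your truncation $\tfrac12\min\{c_n,n\}$ handles the possibility $c_n=+\infty$ cleanly, whereas the paper's selection rule $q(x_n,x_{n+1})\ge c_n-2^{-n}$ in \eqref{Picard1} tacitly assumes the suprema are finite.
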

In this paper, we establish an unified version for the afore-mentioned results. It takes the `one sequence' form of Theorem~\ref{DHM-kq} in the setting of Theorem~\ref{DHM-bms}.

\section{Main Results}

\begin{theorem}{\bf (a unified version of DHM's fixed point theorem).} \label{unified-version}
Let $(X,q)$ be a quasi-metric space, $\Phi: X\tto X$ be a  set-valued mapping, and  $\{x_k\}\subset X$ be a generalized Picard sequence of $\Phi$, i.e., $x_{n+1} \in \Phi(x_n)$ for all $n\in \mathbb{N}$. Assume that the following conditions hold:
\begin{itemize}
\item[\bf(E1)] $\Phi(x_{n+1})\subset\Phi(x_n)$ for all $n\in \mathbb{N}$;
\item[\bf(E2)] $\disp\lim_{n\to \infty}\disp\sup_{x\in\Phi(x_n)} q(x_n, x) = 0$;
\item[\bf(E3)] there is some $\ox\in X$ such that $\ox\in \Phi(x_n)$ for all $n\in \mathbb{N}$;
\item[\bf(E4)]
$\{x_n\}$ has at most one forward limit;
\end{itemize}
Then,   
\begin{eqnarray}\label{common-point}
\bigcap_{n\in \mathbb{N}} \Phi(x_n) = \{\overline{x}\}
\end{eqnarray}
where $\overline{x}$ is taken from {\rm(E3)}. Assume, in addition, that
\begin{itemize}
\item[\bf(E5)] $\Phi(\overline{x})\subset \Phi(x_n)$ for all $n\in \mathbb{N}$.
\end{itemize}
Then, $\ox$ is a nonvariant point of $\Phi$, i.e., $\Phi(\ox) \subset \{\ox\}$; it becomes an invariant point of $\Phi$ provided that $\Phi(\overline{x})\neq \emptyset$.
\end{theorem}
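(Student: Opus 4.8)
The plan is to read the contractive-type hypothesis (E2) as the statement that ``everything $\Phi$ can reach from $x_n$ lies within vanishing forward distance of $x_n$'', and to combine it with (E3) and (E4) to force the intersection $\bigcap_{n\in\mathbb{N}}\Phi(x_n)$ to collapse to a singleton; the additional conclusion then follows at once from \eqref{common-point} and (E5).

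First I would record the trivial inclusion: by (E3), $\ox\in\Phi(x_n)$ for every $n$, so $\ox\in\bigcap_{n\in\mathbb{N}}\Phi(x_n)$ and in particular this set is nonempty. For the reverse inclusion, fix an arbitrary $z\in\bigcap_{n\in\mathbb{N}}\Phi(x_n)$. Since $z\in\Phi(x_n)$ for each $n$, the membership bound $q(x_n,z)\le\sup_{x\in\Phi(x_n)}q(x_n,x)$ together with (E2) yields $\lim_{n\to\infty}q(x_n,z)=0$; that is, $z$ is a forward limit of the sequence $\{x_n\}$. Running the identical argument with $\ox$ in place of $z$ (legitimate, again by (E3)) shows that $\ox$ is also a forward limit of $\{x_n\}$. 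By (E4) the sequence $\{x_n\}$ admits at most one forward limit, whence $z=\ox$. This establishes \eqref{common-point}.

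For the statement under the extra hypothesis (E5): from $\Phi(\ox)\subset\Phi(x_n)$ for all $n$ we obtain $\Phi(\ox)\subset\bigcap_{n\in\mathbb{N}}\Phi(x_n)=\{\ox\}$ by \eqref{common-point}, which is exactly the nonvariance $\Phi(\ox)\subset\{\ox\}$. If moreover $\Phi(\ox)\ne\emptyset$, the only nonempty subset of the singleton $\{\ox\}$ is $\{\ox\}$ itself, so $\Phi(\ox)=\{\ox\}$ and $\ox$ is an invariant (fixed) point of $\Phi$.

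I do not anticipate a real obstacle: the whole argument is two invocations of the membership bound $q(x_n,\cdot)\le\sup_{x\in\Phi(x_n)}q(x_n,x)$ followed by the uniqueness clause (E4). The only delicate point is that the two candidate limits $z$ and $\ox$ must be forward limits of the \emph{same} sequence, which is why the hypotheses single out one generalized Picard sequence $\{x_n\}$ in advance; note also that (E1) further guarantees $x_{n+k}\in\Phi(x_n)$ for every $k\ge1$, so that $\{x_n\}$ is itself forward Cauchy, although this fact is not needed to identify the intersection above.
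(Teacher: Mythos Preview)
Your proof is correct and follows essentially the same route as the paper's: the nontrivial inclusion is obtained by observing that any element of $\bigcap_n\Phi(x_n)$ is a forward limit of $\{x_n\}$ via (E2), and then invoking (E4), after which (E5) immediately yields the nonvariance. Your closing observation that (E1)--(E2) make $\{x_n\}$ forward Cauchy is recorded by the authors as a separate proposition and is indeed unused in this proof.
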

\begin{proof}
Suppose given a sequence $\{x_n\}\subset X$ satisfying conditions {\rm(E1)--(E4)}. Obviously, condition {\rm(E3)} says that
\begin{eqnarray}\label{nonempty}
\{\overline{x}\} \subset \disp\bigcap_{n\in \mathbb{N}} \Phi(x_n).
\end{eqnarray}
Next, we will prove that the intersection is a singleton. Assume, in addition to $\overline{x}$, that an element $\underline{x}$ also belongs to the left-hand side of  \eqref{nonempty}.  By condition {\rm(E2)}, we have
$\disp\lim_{n\to\infty} q(x_n, \overline{x}) 
= \disp\lim_{n\to\infty} q(x_n, \underline{x}) = 0$ which ensures that $\underline{x} = \overline{x}$ due to condition {\rm(E4)} and thus \eqref{nonempty} holds as  an equality, i.e., the common point condition \eqref{common-point} holds.
\vspace*{.05in}
Employing now condition {\rm(E5)} to \eqref{common-point} we obtain
$$
\Phi(\ox) \subset \disp\bigcap_{n\in \mathbb{N}} \Phi(x_n) = \{\ox\}.
$$ 
The proof is complete.
\end{proof}

\begin{proposition} \label{prop-Cauchy} The fulfilment of {\rm(E1)--(E2)} implies that the sequence $\{x_n\}$ is a forward Cauchy sequence with respect to the quasi-metric $q$ in $X$.
\end{proposition}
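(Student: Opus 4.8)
The proof will not proceed by applying the triangle inequality to the one-step costs $q(x_j,x_{j+1})$; instead the plan is to dominate, for a fixed $n$, \emph{every} forward gap $q(x_n,x_{n+k})$ by the single scalar $\sigma_n:=\sup_{x\in\Phi(x_n)}q(x_n,x)$ occurring in (E2), and then to let $n\to\infty$.

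First I would establish the inclusion $x_{n+k}\in\Phi(x_n)$ for every $n\in\mathbb{N}$ and every integer $k\ge 1$. Indeed, $x_{n+k}\in\Phi(x_{n+k-1})$ because $\{x_n\}$ is a generalized Picard sequence of $\Phi$, while iterating (E1) $k-1$ times gives the chain $\Phi(x_{n+k-1})\subset\Phi(x_{n+k-2})\subset\cdots\subset\Phi(x_n)$; combining the two yields $x_{n+k}\in\Phi(x_n)$. (For $k=0$ there is nothing to prove, since $q(x_n,x_n)=0$ by positivity.) Consequently, for all $n\in\mathbb{N}$ and all $k\in\mathbb{N}$,
\[
q(x_n,x_{n+k})\ \le\ \sup_{x\in\Phi(x_n)}q(x_n,x)\ =\ \sigma_n .
\]

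It then remains to invoke (E2), which states precisely that $\sigma_n\to 0$ as $n\to\infty$. Thus, given $\varepsilon>0$, there is $N_\varepsilon\in\mathbb{N}$ with $\sigma_n<\varepsilon$ for all $n\ge N_\varepsilon$, whence $q(x_n,x_{n+k})<\varepsilon$ for all $n\ge N_\varepsilon$ and all $k\in\mathbb{N}$; by the definition of a forward Cauchy sequence in $(X,q)$ this is exactly what is required. I do not expect any genuine obstacle: the only point to notice is that (E2) controls the supremum over the \emph{whole} image $\Phi(x_n)$, which by the step above contains the entire tail $\{x_{n+k}:k\in\mathbb{N}\}$ of the sequence, so the estimate is automatically uniform in $k$ — something that the weaker one-step condition $\lim_n q(x_n,x_{n+1})=0$ of the classical Dancs–Heged\"us–Medvegyev setting (conditions (A4)/(D4)) would not by itself provide.
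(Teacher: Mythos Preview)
Your argument is correct and follows essentially the same route as the paper's own proof: use the Picard property together with the nesting from (E1) to place every tail element $x_{n+k}$ inside $\Phi(x_n)$, dominate $q(x_n,x_{n+k})$ by $\sigma_n=\sup_{x\in\Phi(x_n)}q(x_n,x)$, and then invoke (E2). Your write-up is in fact tidier than the paper's, which contains an index slip (it writes $x_m\in\Phi(x_m)\subset\Phi(x_n)$ for $n\ge m$ and bounds $q(x_m,x_n)$ by a supremum indexed at $n$, whereas the correct inclusion is $x_n\in\Phi(x_{n-1})\subset\cdots\subset\Phi(x_m)$ with the supremum taken at $m$); your version avoids this confusion.
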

\begin{proof}
Condition {\rm(E2)} tells us that for every $\ve > 0$, there exists $N_\ve\in\mathbb{N}$ such that
$$
\sup_{u\in \Phi(x_n)} q(x_{n},u) < \ve\;\mbox{ whenever }\;n\ge N_\ve.
$$
Picking now any $n\ge m\ge N_\ve$, we have $x_m \in \Phi(x_m)\subset\Phi(x_n)$ due to {\rm(E1)} and thus 
$$q(x_m,x_n) \leq \sup_{u\in \Phi(x_n)} q(x_{n},u)< \ve,
$$
which verifies that the sequence $\{x_n\}$ is  forward Cauchy in the quasi-metric space $(X,q)$. The proof is complete.
\end{proof}

\begin{proposition} \label{E3E4} Assume that conditions {\rm(E1)} and {\rm(E2)} hold.   Assume also that the quasi-metric space $(X,q)$ is forward complete and the quasi-metric  enjoys the condition $q(x,y) = 0 \Longleftrightarrow x = y$. 
 Then, condition {\rm(E3$'$)} 
\begin{itemize}
\item[\bf (E3$'$)] there is a backward limit $\ox \in X$ of the sequence $\{x_n\}$ such that $\ox \in \Phi(x_n)$ for all $n\in \mathbb{N}$
\end{itemize}
implies both conditions {\rm(E3)} and {\rm(E4)} in Theorem~\ref{unified-version}.
\end{proposition}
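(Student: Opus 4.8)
\section*{Proof proposal}

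The plan is to obtain (E3) for free and then to pin down the forward limit of $\{x_n\}$ explicitly, after which (E4) follows from a single application of the triangle inequality. First, (E3) is immediate: the point $\ox$ provided by (E3$'$) satisfies $\ox\in\Phi(x_n)$ for all $n\in\mathbb{N}$, which is exactly the content of (E3). So the only real work is (E4).

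Next I would show that this same $\ox$ is in fact a \emph{forward} limit of $\{x_n\}$. Since $\ox\in\Phi(x_n)$ for every $n$, condition (E2) yields
\[
q(x_n,\ox)\;\le\;\sup_{x\in\Phi(x_n)}q(x_n,x)\;\longrightarrow\;0\qquad(n\to\infty),
\]
so $\{x_n\}$ forward converges to $\ox$. (Alternatively, Proposition~\ref{prop-Cauchy} shows that $\{x_n\}$ is forward Cauchy, hence forward convergent by forward completeness of $(X,q)$; the displayed estimate merely identifies the limit as $\ox$.) This is the crux of the argument: the \emph{backward} limit supplied by (E3$'$) is automatically a \emph{forward} limit once (E2) is in force, which is precisely what bridges the two asymmetric notions of convergence.

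Finally, to verify (E4), let $y\in X$ be any forward limit of $\{x_n\}$, i.e.\ $q(x_n,y)\to 0$. Combining this with the backward convergence $q(\ox,x_n)\to 0$ coming from (E3$'$) and the triangle inequality gives
\[
q(\ox,y)\;\le\;q(\ox,x_n)+q(x_n,y)\;\longrightarrow\;0,
\]
so $q(\ox,y)=0$, and the separation property $q(\ox,y)=0\Longleftrightarrow\ox=y$ forces $y=\ox$. Hence $\{x_n\}$ admits exactly one forward limit, namely $\ox$; in particular (E4) holds.

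I do not expect a genuine obstacle here. The one point that requires care is keeping the two asymmetric convergences apart: one must make sure that forward Cauchyness together with forward completeness delivers a \emph{forward} limit, and that it is the specific order of the arguments in $q(\ox,x_n)$ versus $q(x_n,\ox)$ that makes the telescoping in the last display legitimate.
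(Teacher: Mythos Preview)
Your proposal is correct and follows essentially the same route as the paper: (E3) is immediate, (E2) forces $\ox$ to be a forward limit, and the triangle inequality $q(\ox,y)\le q(\ox,x_n)+q(x_n,y)$ together with the separation axiom pins down any forward limit $y$ as $\ox$.

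One small remark worth making: the paper first invokes Proposition~\ref{prop-Cauchy} and forward completeness to produce a forward limit $\underline{x}$, and only then runs the triangle-inequality argument to conclude $\ox=\underline{x}$. Your version instead lets $y$ be an \emph{arbitrary} forward limit from the outset, which is slightly cleaner and shows that the forward-completeness hypothesis is in fact not needed to establish (E4) as stated (``at most one forward limit''). The paper's detour through completeness is harmless but superfluous for that purpose.
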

\begin{proof} Obviously, {\rm (E3$'$)} $\Longrightarrow$ {\rm (E3)}. Therefore, it remains to prove the implication {\rm (E3$'$)} $\Longrightarrow$ {\rm (E4)}.
By Proposition~\ref{prop-Cauchy}, every generalized Picard sequence is forward Cauchy. 
By the assumed  forward completeness property of $X$, it is forward convergent   to some forward limit $\underline{x} \in X$. The fulfillness of {\rm (E3$'$)} ensures the existence
of some backward limit $\ox$ of $\{x_n\}$, i.e., $\disp \lim_{n\to\infty}q(\ox,x_n) = 0$ such that
$$
\ox\in \Phi(x_n)\; \mbox{ for all }\; n\in \mathbb N.
$$
Taking into account condition {\rm(E2)}, $\ox$ is also a forward limit, i.e., $q(x_n,\ox) \to 0$ as $n\to \mathbb{N}$.   Taking into account the triangular inequality to estimate the quasi-distance between $\ox$ and $\underline{x}$, we have $q(\ox,\underline{x}) \leq q(\ox,x_n) + q(x_n,\underline{x})$ for all $n\in \mathbb{N}$ and thus $q(\ox,\underline{x}) = 0$. The additional condition imposed on the quasi-metric implies $\ox = \underline{x}$. Therefore, condition {\rm (E4)} holds.
The proof is complete.  
\end{proof}

\begin{proposition} \label{Prop-E3} Assume that conditions {\rm(E1)-(E2)} hold and the quasi-metric space $(X,q)$ is   forward complete.
 Then, condition {\rm(E3$''$)} 
\begin{itemize}
\item[\bf (E3$''$)] there is $\ox \in X$ is a  forward limit of the sequence $\{x_n\}$ such that $\ox\in \Phi(x_n)$ for all $n\in \mathbb{N}$
\end{itemize}
is equivalent to condition {\rm(E3)} in Theorem~\ref{unified-version}.
\end{proposition}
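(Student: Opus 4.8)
The plan is to establish the two implications separately. The direction $(\mathbf{E3''})\Longrightarrow(\mathbf{E3})$ is immediate, since a forward limit $\ox$ satisfying $\ox\in\Phi(x_n)$ for all $n$ is in particular a point with $\ox\in\Phi(x_n)$ for all $n$, which is exactly $(\mathbf{E3})$; no use of completeness or of $(\mathbf{E1})$--$(\mathbf{E2})$ is needed here. So the content of the proposition is the reverse implication $(\mathbf{E3})\Longrightarrow(\mathbf{E3''})$, and this is where the hypotheses enter.

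For $(\mathbf{E3})\Longrightarrow(\mathbf{E3''})$, first I would invoke Proposition~\ref{prop-Cauchy}: conditions $(\mathbf{E1})$ and $(\mathbf{E2})$ force $\{x_n\}$ to be forward Cauchy. By forward completeness of $(X,q)$, the sequence is forward convergent to some $\underline{x}\in X$, i.e.\ $\disp\lim_{n\to\infty}q(x_n,\underline{x})=0$. The remaining task is to identify this forward limit $\underline{x}$ with the point $\ox$ supplied by $(\mathbf{E3})$, so that $\ox$ itself is a forward limit while still satisfying $\ox\in\Phi(x_n)$ for all $n$, which is precisely $(\mathbf{E3''})$.

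To make that identification, I would show that $\ox$ is also a forward limit of $\{x_n\}$. Fix $m\in\mathbb{N}$; by $(\mathbf{E3})$, $\ox\in\Phi(x_m)$, hence $q(x_m,\ox)\le\sup_{u\in\Phi(x_m)}q(x_m,u)$, and by $(\mathbf{E2})$ the right-hand side tends to $0$ as $m\to\infty$, so $q(x_m,\ox)\to 0$. Thus both $\ox$ and $\underline{x}$ are forward limits of the same forward Cauchy sequence. Applying the triangle inequality, $q(\ox,\underline{x})\le q(\ox,x_n)+q(x_n,\underline{x})$; the second term goes to $0$ by forward convergence to $\underline{x}$. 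The one delicate point — and the main obstacle — is that to conclude $q(\ox,\underline{x})=0$ we need $q(\ox,x_n)\to 0$, which is \emph{backward} convergence of $\{x_n\}$ to $\ox$, not the forward convergence $q(x_n,\ox)\to 0$ we just derived. Unlike in Proposition~\ref{E3E4}, the present statement does not assume the separation property $q(x,y)=0\Leftrightarrow x=y$, nor forward Hausdorffness. I would therefore argue instead that $\underline{x}=\ox$ forces nothing beyond what we need: we actually only need to exhibit \emph{some} forward limit lying in every $\Phi(x_n)$, and $\ox$ already is one. Concretely, $q(x_m,\ox)\to 0$ says $\ox$ is a forward limit of $\{x_n\}$, and $(\mathbf{E3})$ says $\ox\in\Phi(x_n)$ for all $n$; together these are exactly the two clauses of $(\mathbf{E3''})$. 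Hence $(\mathbf{E3})\Longrightarrow(\mathbf{E3''})$ without ever needing to identify $\ox$ with the abstract forward-completeness limit $\underline{x}$, and the proof is complete.
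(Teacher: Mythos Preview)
Your proposal is correct and ultimately takes the same route as the paper: the heart of the argument is that $(\mathbf{E3})$ together with $(\mathbf{E2})$ gives $q(x_n,\ox)\le \sup_{u\in\Phi(x_n)}q(x_n,u)\to 0$, so $\ox$ is itself a forward limit, which is exactly $(\mathbf{E3''})$. The paper's proof does precisely this in two lines, without the detour through the completeness limit $\underline{x}$; as you yourself observe at the end, neither forward completeness nor the identification $\ox=\underline{x}$ is actually needed for $(\mathbf{E3})\Rightarrow(\mathbf{E3''})$, so you could prune the middle paragraph entirely.
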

\begin{proof} Obviously, {\rm(E3$''$)} $\Longrightarrow$ {\rm(E3)}. To justify the reverse implication it is sufficient to show that the imposed conditions {\rm(E1)-(E3)} implies that the element $\ox$ in {\rm(E3)} is, indeed, a forward limit of $\{x_n\}$. By Proposition~\ref{prop-Cauchy}, the sequence $\{x_n\}$ satisfying {\rm(E1)-(E2)} is forward Cauchy. Fix an element $\ox$ satisfying {\rm(E3)}.  Condition~{\rm(E2)} implies that $\disp\lim_{n\to\infty} q(x_n,\ox) = 0$ which clearly verifies that $\ox$ is a forward limit. The proof is complete.
\end{proof}


Next, we derive from Theorem~\ref{unified-version} an extension of Theorem~\ref{DHM-bms}; cf. \cite[Corollary~4.5]{bms15b} which can be used to further generalize the Ekeland  variational principle and its equivalents.  

\begin{theorem}{\bf (an `all sequences' version of Theorem~\ref{unified-version}).} \label{alt-version}
Let $(X,q)$ be a quasi-metric space, $\Phi: X\tto X$ be a 
 set-valued mapping. Assume that 
\begin{itemize}
\item[\bf(F1)] if $u\in \Phi(x)$, then $\Phi(u)\subset\Phi(x)$ for all $u,x\in X$;
\item[\bf(F2)]  for any generalized Picard sequence $\{x_n\}\subset X$, i.e., $x_{n+1} \in \Phi(x_n)$ for all $n\in \mathbb{N}$, if 
$$\disp\lim_{n\to \infty}\disp\sup_{x\in\Phi(x_n)} q(x_n, x) = 0,$$ 
then there exists some element  $x_\ast \in X$ such that $x_\ast \in \Phi(x_n)$  for all $n\in \mathbb{N}$;
\item[\bf(F3)] any forward Cauchy generalized Picard sequence $\{x_n\}\subset X$  
has at most one forward limit;
\item[\bf(F4)] for each generalized Picard sequence $\{x_n\}\subset X$, $\disp\lim_{n\to +\infty}q(x_n, x_{n+1}) =0.$
\end{itemize}
Then, $\Phi$ has a nonvariant point $\overline{x}$ in the sense that $\Phi(\ox) \subset \{\ox\}$. If, furthermore, $\Phi(\ox) \neq \emptyset$, then it is   an invariant point of $\Phi$, i.e., $\Phi(\ox) = \{\ox\}$.
\end{theorem}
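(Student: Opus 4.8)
The plan is to deduce Theorem~\ref{alt-version} from Theorem~\ref{unified-version} by constructing, from an arbitrary starting point, a generalized Picard sequence that satisfies the ``one sequence'' hypotheses (E1)--(E5). The construction is the classical iterative selection used in the proof of Dancs--Heged\H{u}s--Medvegyev, but driven by the sup-functional $\sigma(x):=\sup_{u\in\Phi(x)}q(x,u)$ rather than by a metric or a utility function.

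First I would fix $x_0\in X$ (using (F2) applied to a constant-type sequence, or simply noting $x_0\in\Phi(x_0)$ via the reflexivity hidden in (F1) — actually (F1) alone does not give $x_0\in\Phi(x_0)$, so one starts from any $x_0$ with $\Phi(x_0)\neq\emptyset$, and if every $\Phi(x)$ is empty the conclusion $\Phi(\ox)\subset\{\ox\}$ is trivial). Having $x_n$, if $\Phi(x_n)=\{x_n\}$ we are done; otherwise choose $x_{n+1}\in\Phi(x_n)$ such that $\sigma(x_{n+1})\le \tfrac12\,\sup\{\sigma(u): u\in\Phi(x_n)\}$, or, if that supremum is $+\infty$, just pick $x_{n+1}$ with $\sigma(x_{n+1})$ finite and $\le 1$; more robustly, choose $x_{n+1}\in\Phi(x_n)$ with $q(x_n,x_{n+1})\le \tfrac12\sigma(x_n)$ and simultaneously $\sigma(x_{n+1})\le \inf_{u\in\Phi(x_n)}\sigma(u)+2^{-n}$. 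By (F1), $\Phi(x_{n+1})\subset\Phi(x_n)$, giving (E1), and the sequence is nested: $\Phi(x_{m})\subset\Phi(x_n)$ for $m\ge n$. The key step is to show (E2), i.e. $\sigma(x_n)\to 0$. Here I would argue: the nonincreasing nonnegative sequence $\sigma(x_n)$ has a limit $\ell\ge 0$; if $\ell>0$, then at each stage there is a point $u_n\in\Phi(x_n)$ with $q(x_n,u_n)$ bounded below by a fixed fraction of $\ell$ (by the choice rule), but $u_n\in\Phi(x_n)$ can be inserted as the next term of a generalized Picard sequence, and (F4) forces $q(x_{n},x_{n+1})\to 0$ — so one must be slightly careful to arrange that the selected $u_n$ actually becomes $x_{n+1}$ infinitely often, or interleave. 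The cleanest route: choose $x_{n+1}$ so that $q(x_n,x_{n+1})\ge \tfrac12\sigma(x_n)$ (pick a near-maximizer of $q(x_n,\cdot)$ on $\Phi(x_n)$); then (F4) gives $q(x_n,x_{n+1})\to 0$, hence $\sigma(x_n)\to 0$, which is exactly (E2).

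Once (E1)--(E2) hold for this sequence, Proposition~\ref{prop-Cauchy} makes $\{x_n\}$ forward Cauchy. Then (F2) supplies $x_\ast\in X$ with $x_\ast\in\Phi(x_n)$ for all $n$, which is (E3) (with $\ox:=x_\ast$). Condition (F3) gives that this forward Cauchy Picard sequence has at most one forward limit, which is (E4). Finally (E5), namely $\Phi(\ox)\subset\Phi(x_n)$ for all $n$: since $\ox\in\Phi(x_n)$ for every $n$ by (E3), property (F1) yields $\Phi(\ox)\subset\Phi(x_n)$ for every $n$ directly. Now Theorem~\ref{unified-version} applies verbatim: $\bigcap_n\Phi(x_n)=\{\ox\}$ and $\Phi(\ox)\subset\{\ox\}$, with equality when $\Phi(\ox)\neq\emptyset$.

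The main obstacle I anticipate is the verification of (E2) under hypothesis (F4) alone: (F4) controls consecutive distances $q(x_n,x_{n+1})$ along \emph{every} Picard sequence, but $\sigma(x_n)=\sup_{u\in\Phi(x_n)}q(x_n,u)$ is a supremum that need not be attained, so ``$q(x_n,x_{n+1})\to0$ for all Picard sequences'' does not instantly give ``$\sigma(x_n)\to0$''. The fix is the selection rule above — forcing $q(x_n,x_{n+1})$ to be within a constant factor of $\sigma(x_n)$ — together with the monotonicity of $\sigma$ along the nested sequence. One subtlety to handle carefully: near-maximizers exist only when $\sigma(x_n)<\infty$; if some $\sigma(x_n)=+\infty$ one first selects $x_{n+1}\in\Phi(x_n)$ with $q(x_n,x_{n+1})\ge 1$, but this can happen only finitely often, because otherwise a Picard subsequence would violate (F4). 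Apart from this bookkeeping, everything reduces to the already-established Theorem~\ref{unified-version} and Proposition~\ref{prop-Cauchy}.
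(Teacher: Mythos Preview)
Your approach is correct and is essentially the paper's own proof: construct a generalized Picard sequence by picking $x_{n+1}\in\Phi(x_n)$ as a near-maximizer of $q(x_n,\cdot)$ on $\Phi(x_n)$, use {\rm(F4)} to force $\sigma(x_n)\to 0$, and then feed the sequence into Theorem~\ref{unified-version}. The paper uses the tolerance $q(x_n,x_{n+1})\ge \sigma(x_n)-2^{-n}$ rather than your $q(x_n,x_{n+1})\ge\tfrac12\sigma(x_n)$, but the idea is identical; you are in fact more careful than the paper in two places --- you explicitly verify {\rm(E3)}, {\rm(E4)}, {\rm(E5)} from {\rm(F2)}, {\rm(F3)}, {\rm(F1)} (the paper leaves this implicit), and you address the case $\sigma(x_n)=+\infty$, which the paper's selection rule silently ignores. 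One cosmetic point: in your last paragraph ``a Picard subsequence would violate {\rm(F4)}'' should read ``the constructed Picard sequence itself would violate {\rm(F4)}''.
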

\begin{proof} Without any loss of   generality, we may assume that $\Phi(x) \neq \emptyset$ for all $x\in X$; otherwise, the result is trivial; any element $\ox\in X$ such that $\Phi(\ox) = \emptyset$ is an invariant point of $\Phi$ with $\emptyset = \Phi(\ox) \subset \{\ox\}$. By Theorem~\ref{unified-version}, it is sufficent to show the existence of a generalized Picard sequence satisfying
\begin{eqnarray*}\label{cond100}
\disp\lim_{n\to \infty}\disp\sup_{x\in\Phi(x_n)} q(x_n, x) = 0.
\end{eqnarray*}
Such a sequence can be inductively constructed by starting with an arbitrary element $x_0$ and then following the iterative process:
\begin{eqnarray} \label{Picard1}
x_{n+1}\in\Phi(x_{n})\;\mbox{ with }\;  
q(x_{n},x_{n+1}) \ge \sup_{x\in \Phi(x_{n})} q(x_{n};x) - 2^{-n}\; \mbox{ for } \; n = 0,1,2,\ldots
\end{eqnarray}
It is clear that the sequence  $\{x_n\}$ is well defined and that the convergence condition {\rm(F4)} tells us that the quasi-distances $q(x_n,x_{n+1})$ tend to zero as $n\to\infty$. Taking into account the inequality in (\ref{Picard1}) ensures that $\disp\sup_{x\in\Phi(x_n)} q(x_n, x) \to 0$ as $n\to\infty$. The proof is complete.
\end{proof}
\begin{remark}

Theorem~\ref{alt-version} is an extension of \cite[Corollary~4.5]{bms15SVVA} (Theorem~\ref{DHM-bms}) due to Proposition 2.4 and the fact that forward (left-sequential) Hausdorff property implies the fulfillment of  \textbf{(F3)}.
The following example illustrates the usage of Theorem 2.1:

Let $X = [0,1]$ and the quasi-metric on $X$ defined by
\begin{eqnarray*}
q(x,y) = \begin{cases}
x - y & \mbox{if }\; x\geq y,\\
1&\mbox{if }\; x < y.
\\
\end{cases}
\end{eqnarray*}
 It is not dificult to check that this space is forward complete and forward Hausdorff. Given a forward sequence $\{x_n\}$ in $X$. For $n$ sufficiently large, we have $q(x_n,x_{n+1}) < 1$. The structure of $q$ yields $x_{n+1} < x_n$ and thus the sequence is decreasing eventually. Since it is bounded from below by $0$, it converges to a unique number in $X = [0,1]$.

Next, we will show that the space $(X,q)$ is not forward-backward complete. Consider the sequence $\{x_n\}$ where $x_n = n^{-1}$. Since $q(x_n, x_m) = n^{-1} - m^{-1} < n^{-1}$ for all $m,n\in \mathbb{N}$ with $m > n$, $\{x_n\}$ is obviously a forward Cauchy sequence. Since $q(0,x_n) = 1$ for all $n\in \mathbb{N}$, the sequence $\{x_n\}$  fails to backward converge to $0$.  Fix now an arbitrary number $\ox \in (0,1]$. We have $q(\ox,x_n ) = \ox - x_n$ for all $n$ sufficiently large ($n > 1/\bar{x}$) and thus $\disp \lim_{n\to \infty} q(\ox,x_n ) = \ox > 0$, i.e., $\ox$ is not a backward limit of $\{x_n\}$. Since  the chosen  forward Cauchy sequence $\{x_n\}$ has no  backward limit, the space is not forward-backward complete.  

Consider now a set-valued mapping $\Phi: X \rightrightarrows  X$  with images $\Phi(x) = [0,x]$. Obviously, conditions {\rm(E1)} and {\rm(E2)} are satisfied  by the chosen sequence with $\Phi(x_n) = [0, n^{-1}]$ and $\disp\sup_{x\in \Phi(x_n)} q(x_n, x) = n^{-1}$. Condition {\rm(E3)} is fulfilled for $\ox = 0$. We now show that $0$ is the only forward limit of $\{x_n\}$. Take an arbitrary number $\ox\in (0,1]$. For any interger $n\in \mathbb{N}$ with $n > 1/\ox$, one has $x_n = n^{-1} < \ox$ and thus $q(x_n, \ox) = 1$ clearly veryfying that $\ox$ is not a forward limit of $\{x_n\}$. Theorem~\ref{unified-version} ensures that $0$ is an invariant point of $\Phi$.
\end{remark}

Next, we will derive from Theorem~\ref{unified-version} Qiu's  revised preorder principle in \cite{q14b}.

\begin{theorem}  Theorem~\ref{unified-version}\; $\Longrightarrow$\; Theorem~\ref{DHM-qiu}.
\end{theorem}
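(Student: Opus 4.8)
The plan is to present Theorem~\ref{DHM-qiu} as an instance of Theorem~\ref{unified-version}, with the level-set mapping $S$ playing the role of $\Phi$ and a quasi-metric synthesised from the monotone function $\varphi$. First I would use (C1) to fix $\hat x\in S(x_0)$ with $\varphi(\hat x)<+\infty$, set $\gamma:=\inf\{\varphi(x):x\in S(x_0)\}\in\R$ and $\Lambda:=\varphi(\hat x)$, and restrict attention to $X':=S(\hat x)$. By transitivity $S(x)\subset X'$ for every $x\in X'$, so $\Phi:=S|_{X'}$ is a self-map of $X'$; reflexivity and transitivity of $\preceq$ supply the analogues of (A2)--(A3) at no cost, and monotonicity gives $\gamma\le\varphi\le\Lambda$ on $X'$. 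On $X'$ I would define $q(x,y):=\varphi(x)-\varphi(y)$ when $y\preceq x$ and $q(x,y):=\Lambda-\gamma+1$ otherwise; positivity is immediate, and the triangle inequality follows from a short case analysis (along $\preceq$-chains $q$ is additive, the constant dominates every value of the first branch, and $z\not\preceq x$ excludes $z\preceq y\preceq x$), so $(X',q)$ is a quasi-metric space. The degenerate case $\Lambda=\gamma$ is disposed of at once, since then (C2) already forces $S(x)=\{x\}$ for every $x\in X'$.

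\textbf{Producing the candidate point.} Next, starting from $y_0:=\hat x$, I would build a minimizing Picard sequence of $S$ by choosing inductively $y_{n+1}\in S(y_n)$ with $\varphi(y_{n+1})\le\inf_{z\in S(y_n)}\varphi(z)+2^{-n}$ (the infimum lies in $[\gamma,\Lambda]$). This is a generalized Picard sequence inside $S(x_0)$ along which $\varphi(y_n)-\inf_{z\in S(y_{n-1})}\varphi(z)\le 2^{-(n-1)}\to0$, so (C3) yields $x_\ast\in X$ with $x_\ast\in S(y_n)$ for all $n$; since $x_\ast\preceq y_0=\hat x$, in fact $x_\ast\in X'$. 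A routine monotone-convergence computation then gives $\varphi(x_\ast)=L:=\lim_n\varphi(y_n)=\lim_n\inf_{z\in S(y_n)}\varphi(z)$ and, using $S(x_\ast)\subset S(y_n)$ for every $n$, that $\inf_{z\in S(x_\ast)}\varphi(z)=\varphi(x_\ast)$; in other words, $x_\ast$ minimizes $\varphi$ over its own level set $S(x_\ast)$.

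\textbf{Invoking Theorem~\ref{unified-version}.} Finally I would apply Theorem~\ref{unified-version} on $(X',q)$ with $\Phi=S|_{X'}$ to the \emph{constant} generalized Picard sequence $x_n:=x_\ast$ ($n\in\N$), which is admissible because $x_\ast\in S(x_\ast)=\Phi(x_\ast)$. Conditions (E1) and (E5) are trivial, (E3) holds with $\ox:=x_\ast$, and (E2) holds since $\sup_{z\in\Phi(x_n)}q(x_n,z)=\varphi(x_\ast)-\inf_{z\in S(x_\ast)}\varphi(z)=0$ by the previous step. For (E4), any forward limit $y$ of the constant sequence satisfies $q(x_\ast,y)=0$, which forces $y\preceq x_\ast$ (otherwise $q(x_\ast,y)=\Lambda-\gamma+1>0$) together with $\varphi(y)=\varphi(x_\ast)$; then (C2) at $x_\ast$ applied to the pair $y,x_\ast\in S(x_\ast)$ forces $y=x_\ast$ (else $\varphi(x_\ast)>\min\{\varphi(y),\varphi(x_\ast)\}=\varphi(x_\ast)$), so the constant sequence has exactly one forward limit. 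Theorem~\ref{unified-version} now delivers $\bigcap_n\Phi(x_n)=S(x_\ast)=\{x_\ast\}$; undoing the restriction ($z\preceq x_\ast\preceq\hat x$ implies $z\in X'$) shows $S(x_\ast)=\{x_\ast\}$ in $X$, with $x_\ast\in X'\subset S(x_0)$, which is exactly conclusions (i)--(ii) of Theorem~\ref{DHM-qiu} for $\ox=x_\ast$.

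\textbf{Where the difficulty lies.} The one genuinely delicate point is (E4), and it is worth seeing why one cannot simply run Theorem~\ref{unified-version} on the minimizing sequence $\{y_n\}$ itself: by (E2) every element of $\bigcap_n S(y_n)$ is a forward limit of $\{y_n\}$, and that intersection may contain several pairwise incomparable points, so (E4) can fail outright. Passing to the stationary sequence at the $\varphi$-minimal point $x_\ast$ is exactly what repairs this, and there (E4) reduces to the assertion that $x_\ast$ is the unique $\varphi$-minimizer of $S(x_\ast)$ -- which is precisely the content of Qiu's condition (C2). The remaining pieces (the triangle inequality in the first step and the identities $\varphi(x_\ast)=L$ and $\inf_{z\in S(x_\ast)}\varphi(z)=\varphi(x_\ast)$) are routine.
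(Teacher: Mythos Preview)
Your argument is correct, but it takes a genuinely different route from the paper's.

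The paper equips $S(x_0)$ with the \emph{symmetric} pseudo-metric $q(x,y):=|\varphi(x)-\varphi(y)|$ and applies Theorem~\ref{unified-version} directly to the minimizing Picard sequence itself, checking (E1)--(E5) along that sequence. You instead build an \emph{asymmetric} quasi-metric that encodes the preorder (taking $q(x,y)=\varphi(x)-\varphi(y)$ only when $y\preceq x$, and a large constant otherwise), first extract $x_\ast$ from (C3), establish that $\varphi(x_\ast)=\inf_{S(x_\ast)}\varphi$, and only then invoke Theorem~\ref{unified-version} on the \emph{constant} sequence at $x_\ast$.

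The substantive difference is precisely where you locate it: condition (E4). With the paper's symmetric $q$, every $y$ with $\varphi(y)=\lim_n\varphi(x_n)$ is a forward limit of the minimizing sequence, and two such limits need not be $\preceq$-comparable, so applying (C2) to force uniqueness is delicate (the paper's final line uses (C2) at $x=\bar x$, which tacitly requires $\underline x\preceq\bar x$). Your quasi-metric sidesteps this: $q(x_\ast,y)=0$ forces $y\preceq x_\ast$, so any forward limit of the constant sequence automatically lies in $S(x_\ast)$, where (C2) applies cleanly. The trade-off is that your invocation of Theorem~\ref{unified-version} is essentially degenerate---on a constant sequence with constant $\Phi(x_n)$ the theorem reduces to the one-line observation that $q(x_\ast,z)=0$ for all $z\in S(x_\ast)$ combined with (E4) already gives $S(x_\ast)=\{x_\ast\}$---so most of the real work (constructing $\{y_n\}$, appealing to (C3), and proving $\inf_{S(x_\ast)}\varphi=\varphi(x_\ast)$) happens before the theorem is called. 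The paper's route is shorter and keeps Theorem~\ref{unified-version} doing the heavy lifting; yours is longer but handles (E4) more transparently.
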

\begin{proof} Assume that all the assumptions in Theorem~\ref{DHM-qiu} hold. We construct a bifunction $q: X\times X\to \mathbb{R}_+$  with
$$
q(x,y) := \vert\;\varphi(x) - \varphi(y)\;\vert \; \mbox{ for all }\; x,y\in X.
$$ 
Due to condition~{\rm(C1)} the function $\varphi$ is finite valued over $S(x_0)$. The pair $(S(x_0),q)$ is a quasi-metric space since $q(x,x) = \vert\;\varphi(x) - \varphi(x)\;\vert = 0$ for all $x\in X$ and 
\begin{eqnarray*}
q(x,z) & = & \vert\;\varphi(x) - \varphi(z)\;\vert = \vert\;(\varphi(x) - \varphi(z)) + (\varphi(z) - \varphi(y))\;\vert\\[1ex]
& \leq &  \vert\;\varphi(x) - \varphi(y)\;\vert + \vert\;\varphi(x) - \varphi(y)\;\vert\\[1ex]
& = & q(x,y) + q(y,z) \; \mbox{ for all }\; x,y,z \in X.
\end{eqnarray*}
In order to employ Theorem~\ref{common-point} we need to show that the level-set mapping satisfies all four conditions {\rm(E1)}--{\rm(E5)}.

First, let us construct a generalized Picard sequence starting with $x_0$ satisfying condition {\rm(E2)} as follow:
\begin{eqnarray*}\label{qiu-seq}
\begin{cases}
\mbox{If }\; S(x_{n-1}) = \emptyset, \; \mbox{then STOP};\\[1ex]
\mbox{If }\; S(x_{n-1}) \neq \emptyset, \mbox{then   choose }\; x_{n} \in S(x_{n-1}) \; \mbox{ with } \; \varphi(x_{n}) < \disp\inf_{u \in S(x_{n-1})} \varphi(u) + 2^{-n}.
\end{cases}
\end{eqnarray*}
If there exists $n$ such that $S(x_n) = \emptyset$, then we may take $\overline{x} = x_n$ and clearly
it satisfies {\rm(i)} and {\rm(ii)}. If not, we can obtain a sequence $\{x_n\}\subset S(x_0)$ with $x_{n+1} \in S(x_{n})$ for all $n\in \mathbb{N}$ such that 
$$
\varphi(x_n) < \disp\inf_{u \in S(x_{n-1})} \varphi(u) + 2^{-n}.
$$
Obviously, $\varphi(x_n) - \disp\inf_{u\in S(x_{n-1})} \varphi(u) \to 0$ as $n\to \infty$. 

Fix an arbitrary integer $n\in \mathbb{N}$. We get from $S(x_n) = \{x\in X| \; u\preceq x_n\}$ that for any $x\in S(x_n)$, $\varphi(x) \leq \varphi(x_n)\; \Longleftrightarrow \; \varphi(x_n) - \varphi(x) \leq 0$ due to the monotonicity of $\varphi$. Since
\begin{eqnarray*}
& & \varphi(x_{n}) - \inf_{x\in S(x_{n-1})} \;\varphi(x)\\[1ex]
& = & \varphi(x_{n}) + \sup_{x\in S(x_{n-1})}\big(-\varphi(x)\big)\; = \; \sup_{x\in S(x_{n-1})}\;\big(\varphi(x_{n}) -\varphi(x)\big)\\[1ex] 
& = & \sup_{x\in S(x_{n-1})}\;\big\vert\varphi(x_{n}) -\varphi(x)\big\vert = \sup_{x\in S(x_{n-1)}} q(x_{n};S(x_{n-1})),
\end{eqnarray*}
passsing to the  limit as $n\to\infty$, we derive   that
$$
\lim_{n\to \infty} \sup_{x\in S(x_{n-1)}} q(x_{n};S(x_{n-1})) = 0,
$$
which establishes that condition {\rm(E2)} holds for the sequence $\{x_n\}$.
Then, condition~{\rm(C3)} ensures the existence of an element $\overline{x}$ satisfying $\overline{x} \in S(x_n)$ for all $n\in \mathbb{N}$, i.e., condition~{\rm(E3)}.

By the transitivity of $\preceq$ and the structure of $S$, conditions {\rm(E1)}  and {\rm(E5)} hold. We also have $x\in S(x)$ for all $x\in S(x_0)$ due to the reflexivity of $\preceq$. 

It remains to check condition {\rm(E4)}. Assume now that $\disp\lim_{n\to\infty} q(x_n, \overline{x}) = \disp\lim_{n\to\infty} q(x_n, \underline{x}) = 0$ for two elements $\overline{x}$ and $\underline{x}$ in $X$. Fix $n\in \mathbb{N}$, we get from $\overline{x}, \underline{x}\in S(x_k)$ that $\varphi(\overline{x}) \leq \varphi(x_n)$ and $\varphi(\underline{x}) \leq \varphi(x_n)$ due to the definition of $S$ and the monotonicity of $\varphi$. Therefore, we have  
\begin{eqnarray*}
& & \disp\lim_{n\to\infty} q(x_n, \overline{x}) = \disp\lim_{n\to\infty} q(x_n, \underline{x}) = 0\\[1ex]
& \st{def.}{\Longleftrightarrow} & \disp\lim_{n\to\infty} \big\vert\;\varphi(x_n) - \varphi(\overline{x})\big| = \disp\lim_{n\to\infty} \big\vert \varphi(x_n) - \varphi(\underline{x})\big\vert = 0\\[1ex]
& {\Longleftrightarrow} & \disp\lim_{n\to\infty} \varphi(x_n) - \varphi(\overline{x}) = \disp\lim_{n\to\infty} \varphi(x_n) - \varphi(\underline{x}) = 0\\[1ex]
\end{eqnarray*}
which clearly implies that $\varphi(\overline{x})$ = $\varphi(\underline{x})$. By condition~{\rm(C2)}, $\overline{x}$ = $\underline{x}$ clearly verifying the fulfilment of condition {\rm (E4)}. Indeed, if $\overline{x} \neq \underline{x}$, then we get from {\rm(C2)} that $\varphi(\overline{x}) > \min\{\varphi(\overline{x}), \varphi(\underline{x})\}$ contradicting to $\varphi(\overline{x})\neq \varphi(\underline{x})$. \vspace*{.05in}

We have checked that the chosen sequence $\{x_k\}$ satisfies all the assumptions in Theorem~{\ref{unified-version}}. Employing this result to $\{x_k\}$, we obtain $S(\ox) = \{\ox\}$, i.e., $\ox$ is  an invariant point of $S$. The proof is complete. 
\end{proof}

Finally, let us derive also the Khank-Quy's result in \cite[Lemma~3.4]{kq10}. 

\begin{theorem}  Theorem~\ref{unified-version}\; $\Longrightarrow$\; Theorem~\ref{DHM-kq}. 
\end{theorem}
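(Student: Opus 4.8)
The plan is to let the weak $\tau$-function $p$ itself play the role of the quasi-metric, paralleling the way the utility function was used in the preceding derivation from Theorem~\ref{DHM-qiu}. I would set $q:=p$ on $X\times X$; then $(X,q)$ is a quasi-metric space in the sense of Definition~\ref{quasi} (it is $\mathbb R_+$-valued, may be taken to vanish on the diagonal, and its triangle inequality is exactly axiom $(\tau 1)$), and forward convergence with respect to $q$ amounts to $p(x_n,\ox)\to0$. With $q$ so chosen, I would check that the generalized Picard sequence $\{x_n\}$ supplied by the hypotheses of Theorem~\ref{DHM-kq} satisfies conditions (E1)--(E4) of Theorem~\ref{unified-version}, taking as the distinguished point the very $\ox$ occurring in (B3), and then observe that (B4) is precisely (E5) together with the non-emptiness of $\Phi(\ox)$.

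Conditions (E1), (E2) and (E3) are literal restatements of (B1), (B2) and (B3), so the only real work is (E4): that $\{x_n\}$ has at most one forward $q$-limit. First I would note that (B1) gives $x_m\in\Phi(x_m)\subset\Phi(x_n)$ whenever $m\ge n$, whence $p(x_n,x_m)\le\sup_{u\in\Phi(x_n)}p(x_n,u)$, so that (B2) yields $\disp\lim_{n\to\infty}\sup_{m>n}p(x_n,x_m)=0$. If $a$ is any forward $q$-limit of $\{x_n\}$, i.e.\ $\disp\lim_{n\to\infty}p(x_n,a)=0$, then applying axiom $(\tau 3)$ to the sequence $\{x_n\}$ and the constant sequence $y_n\equiv a$ --- whose two hypotheses, $\lim_n p(x_n,y_n)=0$ and $\lim_n\sup_{m>n}p(x_n,x_m)=0$, are now in force --- gives $\disp\lim_{n\to\infty}d(x_n,a)=0$, i.e.\ $x_n\to a$ in the metric space $(X,d)$. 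A second forward $q$-limit $b$ would likewise satisfy $x_n\to b$ in $(X,d)$, and uniqueness of metric limits would force $a=b$; this is (E4). (The hypothesis $\lim_n p(x_n,\ox)=0$ of Theorem~\ref{DHM-kq} then merely records that $\ox$ is itself a forward $q$-limit, hence by (E4) the forward $q$-limit of $\{x_n\}$.)

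With (E1)--(E4) checked, Theorem~\ref{unified-version} would give $\bigcap_{n\in\mathbb N}\Phi(x_n)=\{\ox\}$, which is the first conclusion of Theorem~\ref{DHM-kq}. For the second, the inclusion half of (B4), namely $\Phi(\ox)\subset\Phi(x_n)$ for all $n$, is exactly (E5), so the supplementary part of Theorem~\ref{unified-version} would yield $\Phi(\ox)\subset\{\ox\}$, and combined with $\Phi(\ox)\ne\emptyset$ this becomes $\Phi(\ox)=\{\ox\}$ --- the remaining assertion of Theorem~\ref{DHM-kq}.

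The hard part is the verification of (E4); everything else is a direct translation. It is there that axiom $(\tau 3)$ together with the Hausdorff property of the metric space does the work: $p$ by itself is neither symmetric nor lower semicontinuous in the first variable, so $p$-convergence alone cannot separate two candidate limits, and $(\tau 3)$ is precisely the bridge that upgrades $p$-convergence of $\{x_n\}$ to $d$-convergence, after which metric Hausdorffness finishes the argument. The one bookkeeping point that needs a moment's attention is that $(\tau 3)$ is quantified over \emph{all} pairs of sequences, which is what legitimizes the constant choice $y_n\equiv a$.
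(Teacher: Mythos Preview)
Your argument is correct in spirit and in fact cleaner than the paper's, but there is one small gap you have glossed over with the phrase ``may be taken to vanish on the diagonal''. A weak $\tau$-function, as defined here, is only required to satisfy $(\tau1)$, $(\tau3)$ and $(\tau4)$; nothing forces $p(x,x)=0$. Hence $(X,p)$ need not be a quasi-metric space in the sense of Definition~\ref{quasi}, and Theorem~\ref{unified-version} cannot be invoked for $q=p$ as stated. (A side remark: in your Cauchy estimate you write $x_m\in\Phi(x_m)$; what you actually use, and what is available, is $x_m\in\Phi(x_{m-1})\subset\Phi(x_n)$ for $m>n$.)

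The paper patches the diagonal issue by setting
\[
q(x,y):=\begin{cases}p(x,y)&\text{if }x\neq y,\\0&\text{if }x=y,\end{cases}
\]
which is a genuine quasi-metric. The price is that a forward $q$-limit $a$ now only gives $q(x_n,a)\to0$, not $p(x_n,a)\to0$ directly, so before invoking $(\tau3)$ one must rule out the possibility that $x_n=a$ infinitely often while $p(a,a)>0$; the paper does this via a short contradiction argument using~(B2). Once $p(x_n,a)\to0$ is secured, your (E4) verification via $(\tau3)$ and the Hausdorff property of $(X,d)$ goes through exactly as you wrote it; this is essentially what the paper does as well. So the two proofs coincide after the diagonal correction, and your route would be complete with that one adjustment.
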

\begin{proof} Assume that there is a convergent sequence $\{x_n\}$ such that its limit $\overline{x}$ in the metric space $(X,d)$ satisfies  all three conditions {\rm(B1)--(B3)} in Theorem~\ref{DHM-kq}. 

Define a function $q : X\times X \to \mathbb{R}_+$ by
\begin{eqnarray}\label{def:oq}
q(x,y) := \begin{cases}
p(x,y) & \mbox{\rm if}\quad x\neq y,\\
0 & \mbox{\rm if}\quad x = y.
\end{cases}
\end{eqnarray}
It is clear that $q$ is a quasi-metric on $X$ since the positivity and triangle inequality properties hold  for $q$ defined in (\ref{def:oq}) due to the definition of $q$ and condition ($\tau1$) respectively. Observe that {\rm(B1)} $\Longleftrightarrow$ {\rm(E1)} and {\rm(B3)} $\Longrightarrow$ {\rm(E3)}. Observe also that {\rm(B2)} $\Longrightarrow$ {\rm(E2)} since $q(x,y) \leq p(x,y)$ for all $x,y\in X$ and thus
$$
\disp\lim_{n\to \infty} \sup_{u\in \Phi(x_n)}\;q(x_n,u) \leq \disp\lim_{n\to \infty} \sup_{u\in \Phi(x_n)}\;p(x_n,u) = 0.
$$ 
Observe finally that condition (\rm{E4}) holds as well. Conditions {\rm(B1)} and {\rm(B2)} ensure  that 
\begin{eqnarray}\label{cauchy-p}
\lim_{n\to \infty}\sup_{m>n} p(x_{n},x_m) = 0.
\end{eqnarray}
Define a sequence $\{y_n\}$ by $y_n = x_{n+1}$ for all $n\in\mathbb{N}$. Then, we have $\lim_{n\to\infty} p(x_n, y_n) = 0$. This together with (\ref{cauchy-p}) implies, by ($\tau{3}$), $\disp\lim_{n\to\infty} d(x_n, x_{n+1}) = 0$ clearly implying that $\{x_n\}$ is a Cauchy sequence in the metric space $(X,d)$.  By Proposition~\ref{prop-Cauchy}, the sequence $\{x_n\}$ is forward Cauchy 
in $(X,q)$. Assume now that it has a forward limit $x_\ast$ in $(X,q)$ in the sense that $\disp\lim_{n\to \infty} q(x_n,x_\ast) = 0$. Then, we have $\disp\lim_{n\to \infty} p(x_n,x_\ast) = 0$ as well. Arguing by contradiction, assume that $\disp\lim_{n\to \infty} p(x_n,x_\ast) =\gamma > 0$. Taking into account the definition of $q$, we get the existance of a subsequence $\{x_{n_k}\}$ of $\{x_n\}$ with $x_{n_k}\equiv x_\ast$. This and condition {\rm(B2)} lead to a contradiction:
$$ 
0 < \gamma \leq \lim_{n_k \to \infty} \sup_{u\in \Phi(x_{n_k})}\;p(x_{n_k},u)
= \disp\lim_{n\to \infty} \sup_{u\in \Phi(x_n)}\;p(x_n,u) = 0.
$$
Let $y_n = x_\ast$ for all $n\in\mathbb{N}$, we have $\disp\lim_{n\to\infty} p(x_n, y_n) = 0$. This together with \eqref{cauchy-p} yields $\disp\lim_{n\to\infty}d(x_n,x_\ast) = 0$ by $(\tau3)$. Since the sequence $\{x_n\}$ is Cauchy in the complete metric space $(X,d)$, the limit $x_\ast$ is unique. Therefore, condition {\rm(E4)} is satisfied for the quasi-metric $q$ defined in (\ref{def:oq}).  

 We have proved that all the assumptions of Theorem~\ref{unified-version} are satisfied in the quasi-metric $(X,q)$. Therefore, the conclusion of Theorem~\ref{DHM-kq} follows from that of Theorem~\ref{unified-version}. The proof is complete.
\end{proof}

\section{Conclusions}
The main result of this paper, Theorems~\ref{unified-version}, provides an extension of Dancs-Heged\"us-Medvegyev's fixed point theorem which not only unify several recent generalized versions of this theorem but also further extend them to the quasi-metric space setting. This feature allows us to obtain new
applications to Ekeland's variational principle and Caristi's fixed point theorem by using our alternative result (Theorem~\ref{alt-version}).   
Following this way, we plan to extend this 
research to  the setting of $\lambda$-spaces and to  the setting of complete cone metric spaces introduced by Lin et al. in \cite{lwa11}.

\section*{Funding}
Research of the second author  was partially  supported by
Ministerio de Economıa y Competitividad under grant
MTM2011-29064-C03(03) and by the Gaspard Monge Program for Optimization and Operations Research (PGMO).

\bibliographystyle{gOPT}
\bibliography{myrefsbao-thera.bib}

\end{document}